\documentclass[letterpaper]{article}
\usepackage[T1]{fontenc}
\usepackage[english]{babel}
\usepackage{color,xcolor}
\usepackage{fullpage,graphicx,tikz,pgfplots}
\usepackage{amsmath,amsfonts,amssymb,amsthm,mathtools,dsfont}
\usepackage{booktabs}
\usepackage{cite}
\usepackage{hyperref}
\usepackage[capitalize,nameinlink,noabbrev]{cleveref}

\allowdisplaybreaks
\mathtoolsset{centercolon}
\pgfplotsset{compat=1.18}
\usepgfplotslibrary{groupplots}
\usetikzlibrary{calc,positioning,decorations.pathreplacing}
\hypersetup{colorlinks, hypertexnames=false, pageanchor=true,
            linkcolor=blue, citecolor={green!50!black}, urlcolor=cyan,
            pdftitle={Local linear convergence of PDHG for SDP}
            }

\theoremstyle{plain}
\newtheorem{assumption}{{Assumption}}
\newtheorem{theorem}{{Theorem}}
\newtheorem{lemma}{{Lemma}}

\newtheorem{proposition}{{Proposition}}
\crefname{assumption}{Assumption}{Assumptions}

\theoremstyle{definition}

\newtheorem{example}{{Example}}

\theoremstyle{remark}

\newcommand{\reals}                  {\mathbb R}
\newcommand{\natint}                 {\mathbb N}
\newcommand{\symm}                   {\mathbb S}
\newcommand{\R}[1][]                 {\reals^{#1}}
\newcommand{\SV}[2]                  {\symm^{#1}_{#2}}
\newcommand{\ones}                   {\mathds 1}

\newcommand{\tran}                   {{\mathsf T}}                
\newcommand{\proj}                   {\Pi}                        
\newcommand{\inprod}[2]              {\langle #1, #2 \rangle}     

\DeclareMathOperator{\Nullspace}     {Null}                       
\DeclareMathOperator{\Range}         {Range}                      
\DeclareMathOperator{\tr}            {{\bf tr}}                   
\DeclareMathOperator{\diag}          {{\bf diag}}                 
\DeclareMathOperator{\dist}          {{\bf dist}}                 
\DeclareMathOperator{\rank}          {rank}                       

\newcommand{\argmin}                 {\mathop{\mathrm{argmin}}}
\newcommand{\mini}                   {\text{\upshape minimize}}
\newcommand{\maxi}                   {\text{\upshape maximize}}
\newcommand{\st}                     {\text{\upshape subject~to}}

\DeclareMathOperator{\aff}           {{\bf aff}}                  

\DeclareMathOperator{\D}             {D}                          

\newcommand{\eg}{{\textit{e.g.}}}
\newcommand{\ie}{{\textit{i.e.}}}

\newcommand{\cA}{\mathcal{A}}
\newcommand{\cB}{\mathcal{B}}
\newcommand{\cC}{\mathcal{C}}
\newcommand{\cD}{\mathcal{D}}
\newcommand{\cL}{\mathcal{L}}
\newcommand{\cM}{\mathcal{M}}
\newcommand{\cN}{\mathcal{N}}
\newcommand{\cO}{\mathcal{O}}
\newcommand{\cT}{\mathcal{T}}
\newcommand{\cU}{\mathcal{U}}
\newcommand{\cV}{\mathcal{V}}

\newcommand{\oline}[1]{\mkern 1.5mu\overline{\mkern-1.5mu#1}}
\newcommand{\Sbar}{\oline{S}}
\newcommand{\Xbar}{\oline{X}}
\newcommand{\Zbar}{\oline{Z}}
\newcommand{\ybar}{\oline{y}}
\newcommand{\zbar}{\oline{z}}

\newcommand{\Ftilde}{\widetilde{F}}
\newcommand{\Stilde}{\widetilde{S}}
\newcommand{\Xtilde}{\widetilde{X}}
\newcommand{\ytilde}{\tilde{y}}

\newcommand{\Shat}{\widehat{S}}
\newcommand{\zhat}{\hat{z}}

\DeclareMathOperator{\fix}{Fix}
\DeclareMathOperator{\inertia}{inertia}
\newcommand{\fro}{\mathsf{F}}
\newcommand{\Id}{\mathrm{Id}}
\newcommand{\SC}{{\scriptscriptstyle \mathrm{SC}}}
\newcommand{\ND}{{\scriptscriptstyle \mathrm{ND}}}
\newcommand{\ID}{{\scriptscriptstyle \mathrm{ID}}}

\title{Local linear convergence of the primal--dual hybrid gradient
algorithm for semidefinite programming}
\author{Xin Jiang%
\thanks{Department of Industrial and Systems Engineering,
University of Houston. Email: \texttt{xinjiang@uh.edu}}
}
\date{July 8, 2026}

\begin{document}
\maketitle

\begin{abstract}
Primal--dual first-order methods are widely used for large-scale
semidefinite programming (SDP), but their ability to compute highly
accurate solutions is not well explained
by global convergence theory alone.
We study the local convergence of the primal--dual hybrid gradient (PDHG)
method applied to a standard primal--dual SDP pair.
We show that PDHG converges eventually ($R$-)linearly
whenever the limiting KKT point satisfies either strict complementarity or
primal--dual nondegeneracy.
The proof views PDHG as a preconditioned proximal point method for the KKT
inclusion and combines its descent inequality with a local error bound.
Under strict complementarity, the error bound follows from the local
spectral geometry of the positive semidefinite cone; under primal--dual
nondegeneracy, it follows from strong regularity of the KKT mapping.
We also give a simple SDP instance where both regularity conditions fail
and PDHG can converge only sublinearly.
This contrasts with linear programming, where PDHG admits a local linear
convergence regime even for degenerate instances.
Numerical experiments support the theory and identify difficult SDP
instances where PDHG struggles to reach high accuracy.
\end{abstract}

\section{Introduction}

Semidefinite programming is a central model in convex optimization.
It combines affine constraints with the geometry of the positive 
semidefinite (PSD) cone, and arises in control, signal processing,
statistics, convex relaxations of polynomial optimization and
combinatorial optimization, and many other fields
\cite{BEG+94,BV04,Lasserre09,Ali95,MHA20}.
The growing demand for solving large-scale instances has motivated the use 
of primal--dual first-order methods, whose main operations typically
are applications of linear maps and projections onto the PSD cone.
This paper develops a local convergence theory for one such method,
the primal--dual hybrid gradient (PDHG) algorithm,
applied to semidefinite programming.

We consider the primal--dual pair of semidefinite programs (SDPs)
\begin{equation} \label{eq:sdp}
\begin{array}{lll}
  \text{Primal:} & \mini & \inprod{C}{X} \\
  & \st & \cA X=b \\
  & & X\in\SV{n}{+}
\end{array} \qquad \qquad \qquad \begin{array}{lll}
  \text{Dual:} & \maxi & b^\tran y \\
  & \st & \cA^\ast y+S=C \\
  & & S\in\SV{n}{+}
\end{array}
\end{equation}
with primal variable $X \in \symm^n$ and dual variables $S \in \symm^n$,
$y \in \R[m]$. Here $\symm^n$ is the set of real symmetric $n \times n$ 
matrices, and $\SV{n}{+}$ is the set of positive semidefinite matrices
in $\symm^n$.
The linear operator $\cA \colon \symm^n \to \R[m]$ is defined by
$\cA X = (\inprod{A_1}{X},\ldots,\inprod{A_m}{X})$,
and $\cA^\ast y = \sum_{i=1}^m y_iA_i$ is its adjoint operator.
The coefficients $C,A_1,\ldots,A_m$ are symmetric $n \times n$ matrices.

When applied to \eqref{eq:sdp}, PDHG \cite{EZC10,CP11a} with stepsizes
$\tau,\sigma > 0$ takes the form
\begin{subequations} \label{eq:pdhg}
\begin{align}
  X_{k+1} &= \proj_{\SV{n}{+}} 
    \big(X_k - \tau (C - \cA^\ast y_k) \big) \label{eq:pdhg-X} \\
  y_{k+1} &= y_k + \sigma
    \big(b - \cA(2X_{k+1}-X_k) \big). \label{eq:pdhg-y}
\end{align}
\end{subequations}
The stepsizes are fixed throughout the paper and satisfy
\begin{equation}\label{eq:stepsize}
  \tau > 0, \qquad \sigma > 0, \qquad \tau \sigma \|\cA\|^2 < 1,
\end{equation}
where $\|\cA\|$ is the operator norm induced by the Frobenius norm
on $\symm^n$ and the Euclidean norm on $\R[m]$.
Each iteration consists of one projection onto the PSD cone
and two applications of the linear maps $\cA$ and $\cA^\ast$.
This simple structure makes PDHG attractive for large-scale SDPs
\cite{LXGY25,WLY24}.  At the same time, practitioners often view
primal--dual first-order methods as effective for finding moderate-accuracy
SDP solutions, but less reliable when high accuracy is required.
This view is consistent with the classic global convergence theory:
sublinear complexity bounds suggest that progress should slow as the
iterates approach the solution \cite{CP11a,CP16b}.

Recent results have begun to refine this picture.
For linear programming, a special case of SDP, \cite{LY24} identifies
a two-phase phenomenon for PDHG: after an initial global phase,
the iterates enter a local phase governed by the active geometry of
the optimal face and then converge at a ($R$-)linear rate.
For semidefinite programming, \cite{HSZ18,KJY26} provide related local
($R$-)linear convergence results for the alternating direction method
of multipliers (ADMM) under different regularity assumptions
at the limit point.
These developments suggest that the eventual behavior of
a first-order method is determined not only by the algorithm,
but also by the variational geometry of the limiting KKT point.

This leaves open the corresponding local theory for PDHG on SDP.
The LP results rely on polyhedral geometry, while the ADMM results use the
structure of the ADMM operator.  Thus, for the nonpolyhedral PSD cone and
the PDHG iteration, local $R$-linear convergence remains unresolved
in general.  We develop this local theory from the monotone inclusion
formulation of the KKT system for~\eqref{eq:sdp}.
Specifically, we view PDHG as a preconditioned proximal point method
applied to the KKT inclusion \cite{HY12b}.
This viewpoint yields a descent inequality in a problem-dependent metric,
and the remaining task is to obtain a compatible local error bound.

We obtain such error bounds under either of two classical regularity
conditions: \textit{strict complementarity} and
\textit{primal--dual nondegeneracy} \cite{AHO97}.
The former is spectral: the positive eigenspaces of the primal matrix and
the dual slack complement each other, leaving no common degenerate zero
eigenspace.  The latter is variational: the affine constraints meet
the tangent geometry of the PSD cone without hidden flat directions,
on both the primal and dual sides.
These two regularity conditions also play rather different roles
in practice.  Strict complementarity can often be diagnosed from a computed
primal--dual solution by inspecting the spectra of the primal matrix and
the dual slack.  Primal--dual nondegeneracy, in contrast, requires
checking a larger and more delicate linear-algebraic condition and can be
expensive for large-scale SDPs.

In view of their different geometric meanings and practical verifiability,
we treat strict complementarity and primal--dual nondegeneracy
as separate SDP regularity mechanisms for local convergence.
We show that either mechanism is sufficient for local $R$-linear
convergence of PDHG.  We also present an SDP instance suggesting that,
without either condition, PDHG may fail to enter a local linear regime.

\paragraph{Contributions.}
The contributions of this paper are twofold.
\begin{itemize}
\item \textit{Theoretical results.}
We prove that PDHG is eventually ($R$-)linearly convergent for SDP
if the limiting KKT point satisfies either strict complementarity or
primal--dual nondegeneracy.  The proof uses the preconditioned
proximal-point interpretation of PDHG and shows that each regularity
condition provides the local error bound needed for the descent argument.
The two cases are different in nature: strict complementarity gives an
error bound through the local spectral geometry of the PSD cone, whereas
primal--dual nondegeneracy gives an error bound through variational
regularity of the KKT mapping.  
Under strict complementarity, the same spectral analysis also yields
finite-time rank identification of the primal iterates; this conclusion
is consistent with the more general partial smoothness theory
\cite{Lewis02,DL14,Wright93}.
We also present a simple SDP instance where both strict complementarity and
primal--dual nondegeneracy fail.  A formal local reduction of the PDHG map,
supported by numerical evidence, predicts sublinear decay.
This provides evidence that, unlike in LP, local linear convergence
of PDHG on SDP cannot be expected without additional regularity.

\item \textit{Empirical results.}
We complement the theory with extensive numerical experiments.
The experiments illustrate the predicted local linear convergence on
SDP instances with different combinations of strict complementarity and
primal--dual nondegeneracy.
\Cref{fig:intro}%
\begin{figure}[tb]
\centering
\begin{tikzpicture}
\begin{groupplot}[
  group style={
    group size=2 by 2, horizontal sep=1.25cm, vertical sep=1.8cm,
  },
  width=0.49\textwidth, height=0.33\textwidth,
  ymode=log,
  title style={yshift=-1.0ex},
  grid=major,
  grid style={line width=.1pt, draw=gray!25},
  major grid style={line width=.2pt, draw=gray!45},
  minor grid style={line width=.1pt, draw=gray!20},
  tick align=inside, tickpos=left,
  xmin=0,
  unbounded coords=discard,
  every axis plot/.append style={line width=0.8pt, mark=none},
  legend columns=2,
  legend style={
    draw=gray!60, fill=none, text=black, font=\small,
    /tikz/every even column/.append style={column sep=0.35cm},
  },
]

\nextgroupplot[
  title={(a) SC holds and ND holds}, legend to name=commonlegend
]
\addplot[red] table[x=iter, y=res, col sep=comma]
  {demo_XM_toy.csv};
\addlegendentry{KKT residual}
\addplot[blue] table[x=iter, y=Zdiff, col sep=comma] 
  {demo_XM_toy.csv};
\addlegendentry{$\|Z_{k+1}-Z_k\|_\fro$}

\nextgroupplot[title={(b) SC fails and ND holds}]
\addplot[red] table[x=iter, y=res, col sep=comma]
  {demo_toy.csv};
\addplot[blue] table[x=iter, y=Zdiff, col sep=comma] 
  {demo_toy.csv};

\nextgroupplot[title={(c) SC holds and ND fails}]
\addplot[red] table[x=iter, y=res, col sep=comma]
  {demo_QS_10.csv};
\addplot[blue] table[x=iter, y=Zdiff, col sep=comma]
  {demo_QS_10.csv};

\nextgroupplot[title={(d) SC fails and ND fails}]
\addplot[red] table[x=iter, y=res, col sep=comma]
  {demo_no.csv};
\addplot[blue] table[x=iter, y=Zdiff, col sep=comma]
  {demo_no.csv};

\end{groupplot}

\path (group c1r2.south west) -- coordinate (legendpos)
  (group c2r2.south east);
\node[anchor=north, yshift=-0.5cm] at (legendpos) {\ref{commonlegend}};
\end{tikzpicture}
\caption{Convergence behavior on four simple SDP instances
with different combinations of strict complementarity (SC) and
nondegeneracy (ND) conditions.
(a) A toy structure-from-motion problem from \cite{HY25}.
(b) A toy example from \cite[p.\ 44]{WSV00}.
(c) The second-order relaxation of a polynomial optimization problem
\cite{WH25,YLCT23}.
(d) A constructed toy example with a unique KKT point that violates
dual nondegeneracy and strict complementarity.
The red curve is the KKT residual, and the blue curve is the fixed-point
difference $\|Z_{k+1}-Z_k\|_\fro$, where $Z_k=X_k-\tau (C-\cA^\ast y_k)$.
In all four instances, PDHG with fixed stepsizes eventually exhibits
local linear convergence.  Additional numerical results using 
real-world data are presented in~\cref{sec:numerics}.}
\label{fig:intro}
\end{figure}
gives a representative summary: in these examples,
PDHG enters a clear linear convergence regime across all four regularity
profiles.  The full set of experiments, reported in \cref{sec:numerics},
covers a broad range of SDP instances.

We also document instances for which PDHG does not reach high accuracy
within a prescribed computational budget of $10^5$ iterations;
in these cases, the overall KKT residual remains larger than $10^{-8}$.
A common feature of these difficult instances is that the minimum positive
eigenvalues of the final primal and dual slack iterates are close to zero
but not numerically zero.  This behavior closely resembles difficult cases
observed for LP \cite{LY24}.
\end{itemize}

\paragraph{Outline.}
The rest of the paper is organized as follows.
After reviewing related work in \cref{sec:related},
we introduce the problem setting and the two regularity conditions used in 
the analysis: strict complementarity and primal--dual nondegeneracy.
\Cref{sec:sc,sec:pdnd} establish local linear convergence of PDHG for SDP
under these two regularity conditions, respectively.
\Cref{sec:discussion} discusses the scope and limitations of the results.
It gives a simple example in which a formal local reduction of the PDHG map
predicts sublinear convergence, compares the proof mechanisms
with existing analyses of PDHG for LP and ADMM for SDP, and 
gives a direct proof of finite-time rank identification for PDHG on SDP.
\Cref{sec:numerics} reports numerical evidence for local linear convergence
and \cref{sec:conclusion} concludes the paper.

\section{Related work} \label{sec:related}

\paragraph{Algorithms for semidefinite programming.}
Semidefinite programming has traditionally been solved to high accuracy by
primal--dual interior-point methods; see, \eg,
\cite{VB96,Todd01,WSV00}.  These methods exploit the self-dual structure
of the positive semidefinite cone and have a mature theory and reliable
implementations, including \texttt{SeDuMi} and \texttt{SDPT3}
\cite{Sturm99,TTT02}.  Their per-iteration cost, however, limits their
use on very large instances.
This has motivated several complementary directions,
including chordal and sparse matrix cone techniques \cite{VA15},
augmented-Lagrangian, ADMM, and semismooth Newton methods
\cite{WGY10,ZST10,YST15,GKY25}, low-rank factorization methods
\cite{BM03,BM05}, and more recent low-rank splitting schemes
\cite{DLY25,HLL+25,MSC26,ACG+25,TT26b}.
The present paper is not concerned with designing a new SDP solver.
Its goal is instead to understand the local behavior of
a basic primal--dual first-order method
when it is applied directly to the standard primal--dual SDP pair.

\paragraph{PDHG.}
The primal--dual hybrid gradient (PDHG) algorithm, also known as the
Chambolle--Pock method, is a primal--dual splitting method for saddle-point
problems and monotone inclusions \cite{EZC10,CP11a,CP16b,PCBC09}.
Its ergodic convergence is well understood, and related primal--dual
first-order methods have also been developed for sparse SDP \cite{JV22}.
When \eqref{eq:stepsize} holds, PDHG can be interpreted as
a preconditioned proximal point method for the KKT inclusion
\cite{HY12b,JV23,OV20}.  This interpretation is central to our analysis.

\paragraph{PDHG for LP.}
In linear programming, PDHG has recently become a practical large-scale
method through the PDLP line of work, where diagonal scaling, restarts,
presolve, and other implementation techniques make it competitive on very
large instances \cite{ADH+21,ADH+26,AHLL23,LY25,LHL+26,CSY+26,LXGY25}.
These developments have also stimulated refined theoretical analyses of
PDHG for LP, including two-stage behavior based on finite basis
identification followed by local linear convergence \cite{LY24}.
SDP differs from LP in an essential way: the PSD cone is nonpolyhedral, and
the projection is not piecewise affine near matrices with zero eigenvalues.
Recent work has also considered adaptive primal--dual splitting methods
designed specifically for large-scale SDP \cite{WLY24}.

\paragraph{Local linear convergence and variational regularity.}
Local linear convergence of proximal splitting methods is often proved
through error bounds, metric subregularity, partial smoothness, or local
linearization after active-manifold identification.
For the proximal point method, this viewpoint goes back to Rockafellar
\cite{Rockafellar76b}; for Douglas--Rachford splitting and ADMM, related
mechanisms appear in \cite{GM76,EB92,BPC+11,DY16,LFP17b,GB17}.  In SDP,
strict complementarity and primal--dual nondegeneracy play analogous
(yet different) regularizing roles.
The nondegeneracy conditions characterize how the affine constraints meet
the local geometry of the semidefinite cone \cite{AHO97} and are later
related to strong regularity of the KKT system \cite{CS08}.
These variational properties are exploited in \cref{sec:pdnd}
to establish the local linear convergence of PDHG when applied to SDP.

\paragraph{Relation to ADMM analyses for SDP.}
The closest work to the present paper is the local convergence theory of
ADMM for SDP under nondegeneracy or strict complementarity
\cite{HSZ18,KJY26}, together with related second-order studies of ADMM
dynamics \cite{KY26}.
Our analysis is different in both the algorithmic map and the metric.
ADMM is naturally studied through the Douglas--Rachford operator
in a signed matrix variable, whereas PDHG is the $P$-resolvent of
the reduced KKT mapping in the primal--dual variable $(X,y)$.
Under strict complementarity, we exploit the smoothness of the PSD-cone
projection near the limiting signed matrix and prove contraction normal to 
the local KKT manifold.  Under primal--dual nondegeneracy,
we use strong metric subregularity of the KKT mapping to obtain
local $Q$-linear convergence directly from the descent inequality.
More detailed discussion can be found in \cref{sec:comparison}.

\section{Problem setting and regularity} \label{sec:setting}

We now fix the notation and regularity assumptions used throughout the
analysis.  The two conditions of interest, strict complementarity and
primal--dual nondegeneracy, capture different ways in which the affine
constraints interact with the geometry of the positive semidefinite cone.
The first condition gives a smooth local model of the cone projection,
whereas the second gives an error bound for the KKT mapping.
These two mechanisms lead to the two convergence arguments
developed in the next sections.

\subsection{Notation}

Let $\symm^n$ denote the set of real symmetric $n \times n$ matrices,
equipped with the inner product $\inprod{X}{Y}=\tr(XY)$
and the Frobenius norm $\|X\|_\fro$.  The cone of positive semidefinite
matrices in $\symm^n$ is denoted by $\SV{n}{+}$.
We write $X\succeq 0$ and $X\succ 0$ to mean that $X$ is
positive semidefinite and positive definite, respectively.
For a linear operator $A$, its range and nullspace are denoted by
$\Range(A)$ and $\Nullspace(A)$, respectively.
The rank and inertia of $X \in \symm^n$ are denoted by
$\rank X$ and $\inertia(X)$, respectively.
For a set-valued mapping $F$, its fixed-point set is denoted by
$\fix(F) = \{u\mid F(u) = u\}$.
Let $e_i$ denote the $i$th unit vector. Define $E_{ii}=e_ie_i^\tran$
and $E_{ij} = (e_ie_j^\tran+e_je_i^\tran)/2$,
where the ambient dimension is understood from context.
We denote $[n]=\{1,\ldots,n\}$,
$\diag(x)$ for the diagonal matrix with diagonal $x$, and $\circ$ for
the Hadamard product; \ie, $(A \circ B)_{ij} = A_{ij}B_{ij}$.
For a linear subspace $\cL$, denote by $\cL^\perp$
its orthogonal complement.

For a set $\cC$, we denote by $\aff \cC$ its affine hull.
If $\cC$ is nonempty and closed, then $\proj_\cC$ denotes the Euclidean
projection onto $\cC$ and $\dist(x,\cC)$ denotes the Euclidean distance
from $x$ to $\cC$:
\[
  \proj_\cC(x) = \argmin_{y\in \cC} \|x-y\|, \qquad
  \dist(x,\cC) = \inf_{y\in \cC} \|x-y\|.
\]
If moreover $\cC$ is convex, then the normal cone to $\cC$ at $x$ is
$N_\cC (x) = \{v \mid \inprod{v}{y-x} \le 0 \text{ for all } y \in \cC\}$.
The closed Euclidean ball of radius $\epsilon$ centered at $x$
is denoted by $\cB_\epsilon(x)$.

For a positive definite self-adjoint operator $P$, define
$\inprod{u}{v}_P = \inprod{u}{Pv}$ and $\|u\|_P = \sqrt{\inprod{u}{u}_P}$.
We write $u \perp_P v$ if $\inprod{u}{v}_P = 0$.
For a nonempty closed convex set $\cC$,
$\proj_\cC^P$ denotes the projection onto $\cC$ in the $P$-metric
and $\dist_P (u,\cC)$ denotes the corresponding distance:
\[
  \proj_\cC^P (u) = \argmin_{v\in \cC} \|u-v\|_P, \qquad
  \dist_P(u,\cC) = \inf_{v\in \cC} \|u-v\|_P.
\]
We denote by $\cB_\epsilon^P(u)$ the closed
$P$-metric ball of radius $\epsilon$ centered at $u$.

\subsection{Standing assumptions}

We make the following assumptions on \eqref{eq:sdp}.
\begin{assumption} \label{asp:prob}
The linear operator $\cA \colon \symm^n \to \R[m]$ is surjective,
and the KKT set
\[
\Omega_\star = \{(X,y,S) \in \symm^n \times \R[m] \times \symm^n \mid 
\cA X=b, \, X\in\SV{n}{+}, \, \cA^\ast y+S=C, \, S\in\SV{n}{+}, \,
\inprod{X}{S}=0 \}
\]
is nonempty.
\end{assumption}
Under \cref{asp:prob}, any KKT point $(X_\star,y_\star,S_\star)$ satisfies
complementary slackness.
Hence $X_\star$ and $S_\star$ admit a simultaneous orthogonal decomposition
\begin{equation} \label{eq:eigen}
\begin{array}{ll}
  X_\star = Q_\star \begin{bmatrix}
    \Lambda_X & 0 \\ 0 & 0
  \end{bmatrix} Q_\star^\tran, \qquad &
  \Lambda_X = \diag(\lambda_1,\ldots,\lambda_r), \\[1ex]
  S_\star = Q_\star \begin{bmatrix}
    0 & 0 \\ 0 & \Lambda_S
  \end{bmatrix} Q_\star^\tran, &
  \Lambda_S = -\diag(\lambda_{n-s+1},\ldots,\lambda_n),
\end{array}
\end{equation}
where $Q_\star \in \R[n \times n]$ is orthogonal and
$\lambda_1 \ge \cdots \ge \lambda_r > 0
> \lambda_{n-s+1} \ge \cdots \ge \lambda_n$, with $r+s\le n$.
Consequently, $\rank X_\star+\rank S_\star\le n$.

\subsection{Strict complementarity}

A KKT point $(X_\star,y_\star,S_\star)$ is \textit{strictly complementary}
if $X_\star + S_\star \succ 0$, or equivalently in \eqref{eq:eigen},
\begin{equation} \label{eq:sc}
  \rank X_\star + \rank S_\star = n.
\end{equation}
Strict complementarity separates the positive eigenspaces of $X_\star$ and
$S_\star$.  It rules out directions in which both the primal and dual slack
matrices are degenerate, and hence makes the active rank pattern stable
under small perturbations.  This is the mechanism that turns
the local geometry of the positive semidefinite cone into a smooth one.
In \cref{sec:sc}, this separation gives a smooth local model of
$\proj_{\SV{n}{+}}$ near $X_\star-\tau S_\star$ and fixes the local face
structure of the KKT set.
The assumption needed in \cref{sec:sc} is formalized below.
\begin{assumption} \label{asp:sc}
The PDHG iteration \eqref{eq:pdhg} converges to a KKT point
$(X_\star,y_\star,S_\star)$ satisfying strict complementarity 
\eqref{eq:sc}.
\end{assumption}

\Cref{asp:sc} is a mild assumption in the sense that the existence of
a strictly complementary solution is a generic property of SDPs
\cite[Theorem~15]{AHO97}.

\subsection{Primal--dual nondegeneracy} \label{sec:pdnd-def}

Another useful regularity condition is primal and dual nondegeneracy,
which is related to the four important subspaces \cite{AHO97}:
\begin{align*}
  \cT_{X_\star} &= \left\{Q_\star \begin{bmatrix} 
      H_{11} & H_{21}^\tran \\ H_{21} & 0
	\end{bmatrix} Q_\star^\tran \;\middle|\; H_{11} \in \symm^r, \,
	H_{21} \in \R[(n-r) \times r] \right\} \\
  \cN_{X_\star} = \cT_{X_\star}^\perp &= \left\{Q_\star \begin{bmatrix} 
	  0 & 0 \\ 0 & H_{22}
	\end{bmatrix} Q_\star^\tran \;\middle|\; H_{22} \in \symm^{n-r} 
	\right\} \\
  \cT_{S_\star} &= \left\{Q_\star \begin{bmatrix} 
	  0 & H_{21}^\tran \\ H_{21} & H_{22}
	\end{bmatrix} Q_\star^\tran \;\middle|\; H_{22} \in \symm^s, \,
	H_{21} \in \R[s \times (n-s)] \right\} \\
  \cN_{S_\star} = \cT_{S_\star}^\perp &= \left\{Q_\star \begin{bmatrix} 
      H_{11} & 0 \\ 0 & 0
	\end{bmatrix} Q_\star^\tran \;\middle|\; H_{11} \in \symm^{n-s}
	\right\}.
\end{align*}
Here $\cT_{X_\star}$ and $\cT_{S_\star}$ are the lineality spaces of
the tangent cones to $\SV{n}{+}$ at $X_\star$ and $S_\star$, respectively,
and $\cN_{X_\star}$ and $\cN_{S_\star}$ are their orthogonal complements.

A KKT point $(X_\star,y_\star,S_\star)$ is said to satisfy
\textit{primal--dual nondegeneracy} if 
\begin{equation} \label{eq:pdnd}
\begin{array}{llcl}
  \text{Primal nondegeneracy:} & 
	\cT_{X_\star} + \Nullspace (\cA) = \symm^n & \Longleftrightarrow &
	\cN_{X_\star} \cap \Range (\cA^\ast) = \{0\} \\
  \text{Dual nondegeneracy:} &
    \cT_{S_\star} + \Range (\cA^\ast) = \symm^n & \Longleftrightarrow &
	\cN_{S_\star} \cap \Nullspace (\cA) = \{0\}.
\end{array}
\end{equation}
These conditions are nondegeneracy conditions on the way
the affine constraints meet the local geometry of the cone.
Primal nondegeneracy says that the feasible affine directions,
together with the tangent directions of the cone at $X_\star$,
span the whole space.  Dual nondegeneracy gives the
analogous condition for the dual slack matrix~$S_\star$.

In general, primal--dual nondegeneracy is independent of 
strict complementarity.  When strict complementarity holds,
then the two conditions have a simple interpretation:
primal (resp., dual) nondegeneracy is equivalent to uniqueness of
the dual (resp., primal) optimal solution \cite{AHO97}.
Moreover, primal--dual nondegeneracy provides an error bound
for the KKT mapping and thus gives a direct route to
local linear convergence of PDHG in \cref{sec:pdnd},
without constructing a smooth local model of the PSD-cone projection.

To make this error-bound statement precise, we introduce two KKT mappings
associated with \eqref{eq:sdp}.  The extended KKT mapping is
\[
  \Ftilde_\mathrm{KKT} = \begin{bmatrix} 
	\cA X - b \\ \cA^\ast y + S - C \\ S + N_{\SV{n}{+}} (X)
  \end{bmatrix}.
\] 
Thus $0 \in \Ftilde_\mathrm{KKT}(X,y,S)$ is equivalent to primal
feasibility, dual feasibility, and complementarity.
We also define the reduced KKT mapping, obtained by eliminating
$S = C - \cA^\ast y$,
\[
  F_\mathrm{KKT} = \begin{bmatrix} 
	C - \cA^\ast y + N_{\SV{n}{+}} (X) \\ \cA X - b
  \end{bmatrix}.
\] 

By \cite[Theorem~18]{CS08}, primal--dual nondegeneracy at a KKT point
$(X_\star,y_\star,S_\star)$ is equivalent to \textit{strong regularity}
of the monotone inclusion $0 \in \Ftilde_\mathrm{KKT} (X,y,S)$
at $(X_\star,y_\star,S_\star)$.  We use this equivalence only through
one of its standard consequences: strong regularity implies
\textit{strong metric subregularity} \cite[Theorems~4.2 and~5.2]{DR04}.
That is, there exist constants $\kappa>0$ and $\epsilon>0$ such that
\[
  \|(X,y,S)-(X_\star,y_\star,S_\star)\|
  \le \kappa \, \dist \big(0, \Ftilde_\mathrm{KKT}(X,y,S) \big) \quad
  \text{whenever } \|(X,y,S)-(X_\star,y_\star,S_\star)\| \le \epsilon.
\]
This estimate differs from ordinary metric subregularity in that
the left-hand side is the distance to the particular KKT point,
rather than the distance to the solution set $\Omega_\star$.

The same property holds for the reduced KKT mapping.
Indeed, set $S=C-\cA^\ast y$.  Then the second component of
$\Ftilde_\mathrm{KKT}(X,y,S)$ vanishes and
\[
  \dist (0, \Ftilde_\mathrm{KKT} (X,y,C-\cA^\ast y))
  \le \kappa_1 \, \dist (0,F_\mathrm{KKT} (X,y)).
\]
Moreover, since $S-S_\star=-\cA^\ast(y-y_\star)$,
\[
  \|(X,y)-(X_\star,y_\star)\|
  \le \kappa_2 \|(X,y,S) - (X_\star,y_\star,S_\star)\|.
\]
Combining these inequalities yields strong metric subregularity of
$F_\mathrm{KKT}$ at $(z_\star,0)$, where $z_\star=(X_\star,y_\star)$.
Equivalently, for any positive definite metric $P$,
there exist constants $\mu>0$ and $\delta>0$ such that
\begin{equation} \label{eq:pdnd-subreg}
  \|z-z_\star\|_P \le \mu \, \dist_{P^{-1}} \big(0,F_\mathrm{KKT}(z)\big)
  \quad \text{whenever } \|z-z_\star\|_P\le \delta .
\end{equation}
Here $\dist_{P^{-1}}$ denotes distance in the dual norm
induced by $P^{-1}$.

The preceding discussion is summarized in \cref{fig:pdnd-rel}.
\begin{figure}[t]
\centering
\begin{tikzpicture}[
    every node/.style={font=\small, align=center, inner sep=3pt},
    box/.style={
        draw, rounded corners=5pt, minimum height=8mm, text width=2.0cm
    },
    bigbox/.style={
        draw, rounded corners=5pt, minimum height=1.2cm, text width=4.0cm
    }
]

\node[box] (ud) {unique dual \\ optimum};
\node[right=0.2cm of ud] (a1)
    {$\xrightleftharpoons[]{\text{SC}}$};
\node[box, right=0.2cm of a1] (prd) {primal ND};

\node[box, below=0.35cm of ud] (up) {unique primal \\ optimum};
\node[ right=0.2cm of up] (a2)
    {$\xrightleftharpoons[]{\text{SC}}$};
\node[box, right=0.2cm of a2] (dnd) {dual ND};

\draw[thick, decorate, decoration={brace, amplitude=6pt}]
  ($(prd.east)+(0.15,0.50)$) -- ($(dnd.east)+(0.15,-0.50)$);

\node[right=0.5cm of prd, yshift=-0.63cm] (iff) {$\Longleftrightarrow$};

\node[bigbox, right=0.13cm of iff] (sr)
  {strong regularity of $\Ftilde_{\mathrm{KKT}}$};

\node[right=0.13cm of sr] (imp) {$\Longrightarrow$};

\node[bigbox, right=0.13cm of imp] (smr)
  {strong metric subregularity \\[0.5ex]
  of $\Ftilde_\mathrm{KKT}$ (and $F_\mathrm{KKT}$)};

\end{tikzpicture}
\caption{Relationships among strict complementarity (SC), primal/dual 
nondegeneracy (ND), primal/dual optima, and regularity properties of
the two KKT mappings $\Ftilde_\mathrm{KKT}$ and $F_\mathrm{KKT}$.}
\label{fig:pdnd-rel}
\end{figure}
For the convergence analysis, the important consequence is the final
implication: primal--dual nondegeneracy yields strong metric subregularity
of the reduced KKT mapping $F_\mathrm{KKT}$.
This is the error bound used in \cref{sec:pdnd}.
We therefore isolate the corresponding assumption 
on the limit point of the PDHG sequence.
\begin{assumption} \label{asp:pdnd}
The PDHG iteration \eqref{eq:pdhg} converges to a KKT point
$(X_\star, y_\star, S_\star)$ satisfying primal--dual nondegeneracy
\eqref{eq:pdnd}.
\end{assumption}

\section{Local linear convergence under strict complementarity}
\label{sec:sc}

The analysis in this section assumes the strict-complementarity regime
from \cref{asp:sc}.  This assumption is used in two ways.
First, $Z_\star = X_\star - \tau S_\star$ is nonsingular,
so the projection onto $\SV{n}{+}$ is smooth near $Z_\star$.
Second, the local KKT set has a fixed face structure.
These two facts reduce the convergence analysis
to a linearized fixed-point map on a smooth local model.

For ease of presentation, we assume without loss of generality
that the limit points $X_\star$ and $S_\star$ are diagonal,
\ie, $Q_\star = I$ in \eqref{eq:eigen}.  This assumption does not limit
the scope of our conclusions because we can readily construct a pair of
SDPs equivalent to \eqref{eq:sdp} and generate PDHG iterates
$(\Xtilde_k, \ytilde_k, \Stilde_k)$ orthogonally similar to the iterates
$(X_k,y_k,S_k)$ generated by \eqref{eq:pdhg};
see \cite{KJY26} for a detailed construction.

\subsection{Local KKT geometry under strict complementarity}

Strict complementarity identifies the two minimal faces that contain
the nearby primal and dual slack solutions.  More precisely,
any other KKT point $(X,y,S)$ must satisfy $\inprod{X}{S_\star}=0$ and
$\inprod{X_\star}{S}=0$.
Since $X,S,X_\star,S_\star$ are all positive semidefinite,
these equalities force $X$ to lie in the face exposed by $S_\star$,
and $S$ to lie in the complementary face exposed by $X_\star$.
Thus, locally, the KKT set is a relatively open subset of an affine
space obtained by restricting $X$ and $S = C - \cA^\ast y$
to their identified faces.
The next lemma records this local representation.
\begin{lemma} \label{lem:local-geometry}
Suppose \cref{asp:prob,asp:sc} hold.
There is a neighborhood of $(X_\star,y_\star)$ such that
the KKT set is locally represented by
\begin{equation} \label{eq:M}
  \cM = \{(X,y) \mid \cA X=b, \, X \in \cU, \, C-\cA^\ast y \in \cV, \,
  X_{11} \succ 0, \, (C-\cA^\ast y)_{22} \succ 0\},
\end{equation}
where the linear subspaces $\cU$ and $\cV$ are defined by
\[
  \cU = \left\{\begin{bmatrix} U_{11} & 0 \\ 0 & 0 \end{bmatrix}
  \;\middle|\; U_{11} \in \symm^r \right\}, \qquad
  \cV = \left\{\begin{bmatrix} 0 & 0 \\ 0 & V_{22} \end{bmatrix}
  \;\middle|\; V_{22} \in \symm^{n-r} \right\}.
\]
Equivalently, for all $(X,y)$ sufficiently close to $(X_\star,y_\star)$,
\[
  (X,y,C-\cA^\ast y) \in \Omega_\star
  \qquad \Longleftrightarrow \qquad
  (X,y) \in \cM.
\] 
Moreover, $\cM$ is a relatively open subset of an affine space,
and its tangent space is
\begin{equation} \label{eq:tangent}
  \cT_\cM = \{(U,v) \mid U \in \cU, \ \cA U=0, \ \cA^\ast v \in \cV\}.
\end{equation}
\end{lemma}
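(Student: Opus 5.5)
We prove the two inclusions separately, working with $Q_\star=I$ so that $X_\star=\diag(\Lambda_X,0)$ and $S_\star=\diag(0,\Lambda_S)$, where $\Lambda_X\succ 0$ is $r\times r$. Strict complementarity gives $s=n-r$, so $\Lambda_S\succ 0$ is $(n-r)\times(n-r)$. Write $S=C-\cA^\ast y$.

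\emph{Step 1: KKT points near $(X_\star,y_\star)$ lie in $\cM$.} Let $(X,y,S)\in\Omega_\star$. All KKT points share the optimal value, so $\inprod{C}{X}=b^\tran y$. The same identity gives
\[
  \inprod{X}{S_\star}=\inprod{C}{X}-y_\star^\tran\cA X=\inprod{C}{X}-b^\tran y_\star=0,
\]
and similarly $\inprod{X_\star}{S}=0$. Since $X\succeq0$ and $\inprod{X}{S_\star}=\tr(X_{22}\Lambda_S)$ with $\Lambda_S\succ0$, we get $X_{22}=0$. Positive semidefiniteness of $X$ then forces $X_{21}=0$, so $X\in\cU$. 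The symmetric argument with $\Lambda_X\succ0$ gives $S\in\cV$. Finally, $X_{11}$ is close to $\Lambda_X\succ0$ and $S_{22}$ is close to $\Lambda_S\succ0$, so for $(X,y)$ near $(X_\star,y_\star)$ both blocks are positive definite. Together with $\cA X=b$, this shows $(X,y)\in\cM$.

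\emph{Step 2: points of $\cM$ are KKT points.} Let $(X,y)\in\cM$. Then $X=\diag(X_{11},0)\succeq0$, $S=\diag(0,S_{22})\succeq0$ and $\cA X=b$, so primal and dual feasibility hold. Moreover $XS=0$, hence $\inprod{X}{S}=0$, and $(X,y,S)\in\Omega_\star$. This direction needs no neighborhood. The two steps together give the stated equivalence on a neighborhood of $(X_\star,y_\star)$.

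\emph{Step 3: $\cM$ is relatively open in an affine space, with tangent space \eqref{eq:tangent}.} Let
\[
  \cL=\{(X,y)\mid \cA X=b,\ X\in\cU,\ C-\cA^\ast y\in\cV\}.
\]
This set is affine, since each condition is an affine constraint (membership of the affine image $C-\cA^\ast y$ in a subspace is affine), and it is nonempty because it contains $(X_\star,y_\star)$. The conditions $X_{11}\succ0$ and $(C-\cA^\ast y)_{22}\succ0$ are open, because they define preimages of the open cone of positive definite matrices under continuous maps. Hence $\cM$ is an open subset of $\cL$. Its tangent space is the direction space of $\cL$, obtained by taking differences of two points of $\cL$: $U\in\cU$ with $\cA U=0$, and $v$ with $\cA^\ast v\in\cV$ (the sign is immaterial for a subspace). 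This is \eqref{eq:tangent}.

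The only step that carries real content is the first one: the derivation $\inprod{X}{S_\star}=0$ from the common optimal value, followed by the PSD argument that a zero diagonal block forces the off-diagonal block to vanish. The remaining steps are routine bookkeeping.
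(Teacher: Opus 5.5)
Your proof is correct and follows the paper's argument essentially step for step: the common optimal value gives $\inprod{X}{S_\star}=0$ and $\inprod{X_\star}{S}=0$, the positive definite blocks of $X_\star$ and $S_\star$ then force $X\in\cU$ and $S\in\cV$, the converse is immediate from the block structure, and the tangent space is the direction space of the affine set obtained by dropping the two open definiteness constraints. Your route through $\tr(X_{22}\Lambda_S)=0$, followed by the vanishing of the off-diagonal block of a PSD matrix with a zero diagonal block, is just an explicit version of the paper's step $XS_\star=0$, and your Step~3 spells out what the paper states in one line.
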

\begin{proof}
Let $(X,y,S)$ be a KKT point near $(X_\star,y_\star,S_\star)$.
Since $X$ is primal optimal and $(y_\star,S_\star)$ is dual optimal,
\[
  \inprod{C}{X} - b^\tran y_\star = \inprod{X}{S_\star} = 0.
\]
As $X \succeq 0$ and $S_\star \succeq 0$, this implies $X S_\star=0$.
Since $S_\star$ is positive definite on the right bottom block
in~\eqref{eq:eigen}, we obtain $X \in \cU$.
Similarly, primal optimality of $X_\star$ and dual optimality of $(y,S)$
gives $\inprod{X_\star}{S}=0$.
Since $S \succeq 0$ and $X_\star$ is positive definite on the first block,
we obtain $S \in \cV$. 
For all KKT points sufficiently close to $(X_\star,y_\star,S_\star)$,
the positive blocks $X_{11}$ and $S_{22}$ remain positive definite.
Hence $(X,y) \in \cM$.

Conversely, if $(X,y) \in \cM$ and $S=C-\cA^\ast y$, then $X \succeq 0$,
$S \succeq 0$, $\cA X=b$, $\cA^\ast y+S=C$, and $XS=0$
by the block structure.  Thus $(X,y,S) \in \Omega_\star$.
Finally, $\cM$ is an open subset of the affine space obtained
by replacing the two positive-definiteness inequalities in \eqref{eq:M}
by the linear equalities.
Differentiating those affine constraints gives \eqref{eq:tangent}.
\end{proof}

\subsection{Resolvent representation and local linearization of PDHG}

We next rewrite PDHG as a fixed-point iteration in a metric adapted to
the primal--dual coupling.  This representation has two advantages.
First, it gives firm nonexpansiveness of the PDHG map in the natural
$P$-metric.  Second, under strict complementarity, it allows us to
differentiate the map locally by differentiating the projection onto
the positive semidefinite cone.

Define the self-adjoint block operator
\begin{equation} \label{eq:P}
  P = \begin{bmatrix}
	\tau^{-1} \Id & \cA^\ast \\ \cA & \sigma^{-1} I_m
  \end{bmatrix}.
\end{equation}
The stepsize condition \eqref{eq:stepsize} implies that $P$ is 
positive definite.
Consequently, we define the inner product and norm associated with $P$:
\begin{equation} \label{eq:P-metric}
\begin{aligned}
  \inprod{(U,v)}{(W,w)}_P
	&= \inprod{U}{\tau^{-1}W+\cA^\ast w} + v^\tran (\cA W+\sigma^{-1}w), \\
  \|(U,v)\|_P^2
    &= \tau^{-1} \|U\|_\fro^2 + 2v^\tran \cA U + \sigma^{-1} \|v\|_2^2.
\end{aligned}
\end{equation}

It is known that PDHG \eqref{eq:pdhg} is the preconditioned proximal point
method for $F_\mathrm{KKT}$ in the metric $P$ \cite{HY12b,JV23}.
Indeed, the update \eqref{eq:pdhg} can be written as
\begin{equation} \label{eq:ppm}
  0 \in F_\mathrm{KKT} (z_{k+1}) + P (z_{k+1} - z_k), \quad
  \text{where $z_k = (X_k, y_k)$},
\end{equation} 
or equivalently,
\[
  z_{k+1} = Rz_k, \quad \text{where } R = (P+F_{\mathrm{KKT}})^{-1} P.
\]
In other words, $R$ is the $P$-resolvent of $F_\mathrm{KKT}$.
In particular, since $F_\mathrm{KKT}$ is maximal monotone and $P \succ 0$,
the PDHG map $R$ is firmly nonexpansive in the $P$-metric:
\begin{equation} \label{eq:fne}
  \|Rz - Rz'\|_P^2 \le \inprod{Rz-Rz'}{z-z'}_P \quad \text{for all } z,z';
\end{equation} 
see, \eg, \cite[Proposition~23.8]{BC17}.

The rest of this subsection studies the local differential properties of
$R$ near the local KKT manifold~\eqref{eq:M}.
Those properties are built on the directional derivative of
the positive-semidefinite cone projector $\proj_{\SV{n}{+}}$;
see \cite[Theorem~4.6]{SS02} and \cite[Lemma~1]{KJY26}.
More precisely, given a nonsingular matrix $Z \in \symm^n$,
denote its eigen-decomposition by
\[
  Z = Q \diag (\lambda_1,\ldots,\lambda_n) Q^\tran, \quad \text{where }
  \lambda_1 \ge \cdots \ge \lambda_r > 0 > \lambda_{r+1} \ge \cdots \ge
  \lambda_n
\] 
and $Q \in \R[n \times n]$ is an orthogonal matrix.
Then, the function $\proj_{\SV{n}{+}}$ is Fr\'{e}chet differentiable
and its Fr\'{e}chet differential at $Z$ for $H \in \symm^n$ 
is a linear operator $J \colon \symm^n \to \symm^n$ given by
\[
  JH = \D \proj_{\SV{n}{+}} (Z) [H] 
  = Q (\Gamma \circ (Q^\tran H Q)) Q^\tran,
\] 
where the symmetric $n \times n$ matrix $\Gamma$ is defined as
\[
  \Gamma = \begin{bmatrix}
    \ones_r \ones_r^\tran & \Theta^\tran \\ \Theta & 0
  \end{bmatrix}.
\] 
Here, $\ones_r \ones_r^\tran$ is the all-ones matrix of size $r \times r$
and $\Theta \in \R[(n-r) \times r]$ captures the off-block-diagonal part
in $\Gamma$:
\[
  \Theta_{ij} = \frac{\lambda_j}{\lambda_j - \lambda_{i+r}} \in (0,1)
  \quad \text{for $i \in [n-r]$ and $j \in [r]$}.
\] 

Let $\zbar = (\Xbar,\ybar) \in \cM$ be close to
$z_\star = (X_\star,y_\star)$, and set
\[
  \Sbar = C - \cA^\ast \ybar, \qquad
  \Zbar = \Xbar - \tau \Sbar, \qquad
  J_{\zbar} = \D\proj_{\SV{n}{+}} (\Zbar).
\]
By the local representation~\eqref{eq:M},
\[
  \Zbar = \begin{bmatrix}
    \Xbar_{11} & 0 \\ 0 & -\tau \Sbar_{22} \end{bmatrix}, \qquad
  \Xbar_{11} \succ 0, \qquad \Sbar_{22} \succ 0.
\]
Hence $\Zbar$ is nonsingular and $R$ is continuously differentiable
near $\zbar$.  Moreover, the derivative of $R$ at $\zbar$ is the linear map
$G_{\zbar} = \D R(\zbar)$ given by
\begin{equation} \label{eq:G}
  G_{\zbar} \begin{bmatrix} U \\ v \end{bmatrix} =
  \begin{bmatrix}
    J_{\zbar}(U+\tau\cA^\ast v)\\
    v+\sigma\cA U - 2\sigma\cA J_{\zbar} (U+\tau\cA^\ast v)
  \end{bmatrix}.
\end{equation}
In particular, at the limit point $(X_\star, y_\star, S_\star)$
of PDHG, write $Z_\star = X_\star - \tau S_\star$,
$J_\star = J_{z_\star} = \D \proj_{\SV{n}{+}} (Z_\star)$, and 
$G_\star = G_{z_\star}$.

\subsection{Fixed directions and normal contraction}

In this subsection, we use the differential description of the PDHG map~$R$
to separate the tangent and normal directions to the local KKT set~$\cM$.
We show that the tangent directions $\cT_\cM$ are exactly the set of
fixed-points of $G_\star$, and that all directions normal to~$\cM$
are contracted uniformly.

The first lemma identifies the fixed-point set of the local linearization.
\begin{lemma} \label{lem:fixG}
Suppose \cref{asp:prob,asp:sc} hold.
For every $\zbar \in \cM$ sufficiently close to $z_\star$,
\[
  \fix(G_{\zbar}) = \cT_\cM.
\]
In particular, $\fix(G_\star) = \cT_\cM$.
\end{lemma}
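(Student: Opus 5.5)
The proof is a direct computation with the explicit formula \eqref{eq:G} for $G_{\zbar}$. The key preliminary observation is the block structure of $J_{\zbar}$. By \cref{lem:local-geometry}, $\Zbar = \mathrm{blkdiag}(\Xbar_{11}, -\tau\Sbar_{22})$ with $\Xbar_{11}\succ 0$ and $\Sbar_{22}\succ 0$. We may therefore take the eigenvector matrix to be block diagonal, $Q=\mathrm{blkdiag}(Q_1,Q_2)$, where $Q_1^\tran \Xbar_{11}Q_1=\diag(\lambda_1,\dots,\lambda_r)$ and $Q_2^\tran(-\tau\Sbar_{22})Q_2=\diag(\lambda_{r+1},\dots,\lambda_n)$.

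Under this choice, $J_{\zbar}$ acts blockwise on $H$:
\begin{itemize}
\item it is the identity on the $(1,1)$ block;
\item it annihilates the $(2,2)$ block;
\item on the off-diagonal block it maps $H_{21}$ to $Q_2(\Theta\circ(Q_2^\tran H_{21}Q_1))Q_1^\tran$, with every entry of $\Theta$ in $(0,1)$.
\end{itemize}
This holds for every $\zbar\in\cM$, and "sufficiently close" is needed only so that $\zbar$ lies in the neighborhood where \cref{lem:local-geometry} applies.

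\emph{Inclusion $\cT_\cM\subseteq\fix(G_{\zbar})$.} Let $(U,v)\in\cT_\cM$ and put $H=U+\tau\cA^\ast v$. Since $U\in\cU$ and $\cA^\ast v\in\cV$, the matrix $H$ has $(1,1)$ block $U_{11}$, zero off-diagonal blocks, and $(2,2)$ block $\tau(\cA^\ast v)_{22}$. The blockwise description then gives $J_{\zbar}H=U$. The second component of \eqref{eq:G} becomes $v+\sigma\cA U-2\sigma\cA U=v-\sigma\cA U=v$, because $\cA U=0$. Hence $G_{\zbar}(U,v)=(U,v)$.

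\emph{Inclusion $\fix(G_{\zbar})\subseteq\cT_\cM$.} Suppose $G_{\zbar}(U,v)=(U,v)$.
\begin{enumerate}
\item The first component gives $J_{\zbar}(U+\tau\cA^\ast v)=U$. Substituting this into the second component yields $\sigma\cA U-2\sigma\cA U=0$, so $\cA U=0$.
\item Reading the relation $J_{\zbar}(U+\tau\cA^\ast v)=U$ block by block, the $(2,2)$ block gives $U_{22}=0$, and the $(1,1)$ block gives $(\cA^\ast v)_{11}=0$.
\item In the rotated coordinates $\widehat U_{21}=Q_2^\tran U_{21}Q_1$ and $\widehat W=Q_2^\tran(\cA^\ast v)_{21}Q_1$, the off-diagonal block gives the entrywise relation $(1-\Theta_{ij})\widehat U_{ij}=\tau\Theta_{ij}\widehat W_{ij}$.
\end{enumerate}

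The crux is step 3: the entrywise relation alone does not force $\widehat U_{21}=0$, and we must use $\cA U=0$ once more. Since $\cA U=0$,
\[
0=v^\tran\cA U=\inprod{U}{\cA^\ast v}=\inprod{U_{11}}{(\cA^\ast v)_{11}}+2\inprod{U_{21}}{(\cA^\ast v)_{21}}+\inprod{U_{22}}{(\cA^\ast v)_{22}}.
\]
By step 2 the first and third terms vanish, and the middle term equals $2\inprod{\widehat U_{21}}{\widehat W}$ by orthogonal invariance. Substituting the entrywise relation gives
\[
0=2\sum_{i,j}\frac{1-\Theta_{ij}}{\tau\Theta_{ij}}\,\widehat U_{ij}^2 .
\]
Every coefficient is strictly positive because $\Theta_{ij}\in(0,1)$, so $\widehat U_{21}=0$. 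The entrywise relation then gives $\tau\Theta_{ij}\widehat W_{ij}=0$, and since $\Theta_{ij}>0$ we get $\widehat W=0$.

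Collecting the conclusions: $U_{21}=0$ and $U_{22}=0$, so $U\in\cU$; $\cA U=0$; and $(\cA^\ast v)_{11}=0$ and $(\cA^\ast v)_{21}=0$, so $\cA^\ast v\in\cV$. Thus $(U,v)\in\cT_\cM$ as described in \eqref{eq:tangent}. Taking $\zbar=z_\star$ gives $\fix(G_\star)=\cT_\cM$.
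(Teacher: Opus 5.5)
Your proof is correct and follows the paper's argument. You get $\cA U=0$ from the second component, then use $0=\inprod{U}{\tau\cA^\ast v}$ (which the paper writes as $\inprod{J_{\zbar}H}{(I-J_{\zbar})H}$) together with $\Theta_{ij}\in(0,1)$ to force the off-diagonal block to vanish. Your explicit block-diagonal eigenbasis $Q=\mathrm{blkdiag}(Q_1,Q_2)$ is a small improvement, since the paper's displayed formula for $J_{\zbar}H$ tacitly assumes $\Zbar$ is already diagonal.
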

\begin{proof}
Let $\zbar \in \cM$ and $(U,v)$ be a fixed point of $G_{\zbar}$;
\ie, $G(U,v) = (U,v)$.  Set $H = U + \tau \cA^\ast v$.
Then the fixed-point equation gives
\[
  U = J_{\zbar} H, \qquad \cA (U - 2J_{\zbar} H) = 0.
\] 
Substituting $U = J_{\zbar} H$ into the second equation obtains $\cA U=0$.
Moreover, $\tau \cA^\ast v = H - U = (I-J_{\zbar}) H$.
Taking the inner product with $U = J_{\zbar} H$ gives
\begin{equation} \label{eq:fixG-prf}
  0 = \tau \inprod{\cA U}{v} = \inprod{J_{\zbar} H}{(I-J_{\zbar}) H}.
\end{equation} 
Let $H$ be partitioned as
\[
  H=\begin{bmatrix} H_{11} & H_{21}^\tran \\ H_{21} & H_{22} \end{bmatrix}.
\] 
Then,
\[
  J_{\zbar} H = \begin{bmatrix} 
	H_{11} & (\Theta \circ H_{21})^\tran \\ \Theta \circ H_{21} & 0
  \end{bmatrix}, \qquad (I - J_{\zbar}) H = \begin{bmatrix} 
	0 & ((I-\Theta) \circ H_{21})^\tran \\ (I-\Theta) \circ H_{21} & H_{22}
  \end{bmatrix},
\] 
where the matrix $\Theta \in \R[(n-r) \times r]$ satisfies
$\Theta_{ij} \in (0,1)$ for all $i,j$.
Hence the zero inner product in \eqref{eq:fixG-prf} forces $H_{21}=0$.
Consequently,
\[
  U = J_{\zbar} H \in \cU, \qquad
  \cA^\ast v = \tau^{-1} (I-J_{\zbar}) H \in \cV.
\]
Combining with $\cA U = 0$ yields $(U,v) \in \cT_\cM$.

Conversely, if $(U,v) \in \cT_\cM$, then $U \in \cU$, $\cA U=0$, 
and $\cA^\ast v \in \cV$.  Hence $J_{\zbar} (U + \tau\cA^\ast v)=U$,
and the second component of $G_{\zbar}$ in \eqref{eq:G} becomes
$v+\sigma\cA U-2\sigma\cA U=v$. Hence $G_{\zbar} (U,v) = (U,v)$.
\end{proof}

The second lemma characterizes the contraction properties
of vectors normal to $\cT_\cM$.
Let $\proj^P$ denote the $P$-orthogonal projector onto $\cT_\cM$:
\[
  \proj^P (z) = \argmin_{z^\prime \in \cT_\cM} \tfrac12 \|z-z^\prime\|_P^2.
\] 

\begin{lemma} \label{lem:normal-contraction}
Suppose \cref{asp:prob,asp:sc} hold.
There exists $\epsilon > 0$ such that the set
$\cC_\epsilon = (\aff\cM) \cap \cB^P_\epsilon (z_\star)$ is a subset
of~$\cM$.  Moreover, there exist $\delta > 0$ and $\alpha \in (0,1)$
such that for every $\zbar \in \cC_\epsilon$ and every $u \perp_P \cT_\cM$
with $\|u\|_P \le \delta$, it holds that
\begin{equation} \label{eq:normal-expansion}
  R(\zbar + u) = \zbar + G_{\zbar} u + r_{\zbar} (u),
\end{equation}
where
\begin{equation} \label{eq:normal-G}
  \|(I-\proj^P) G_{\zbar} u\|_P \le \alpha \|u\|_P
\end{equation}
and
\begin{equation} \label{eq:normal-r}
  \|r_{\zbar} (u)\|_P \le \frac{1-\alpha}{2} \|u\|_P.
\end{equation}
Consequently, there is a constant $\rho \in (0,1)$ such that
\begin{equation}\label{eq:normal-contraction}
  \dist_P(Rz,\cM) \le \rho \dist_P(z,\cM)
\end{equation}
for all $z$ sufficiently close to $z_\star$.
\end{lemma}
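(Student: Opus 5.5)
The proof has four steps: the set $\cC_\epsilon$, the linear contraction bound \eqref{eq:normal-G}, the remainder bound \eqref{eq:normal-r}, and the distance estimate \eqref{eq:normal-contraction}.

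\emph{Step 1: the set $\cC_\epsilon$.} By \cref{lem:local-geometry}, $\cM$ is the intersection of $\aff\cM$ with the open conditions $X_{11}\succ 0$ and $(C-\cA^\ast y)_{22}\succ 0$. Both conditions hold strictly at $z_\star$ by strict complementarity. By continuity they persist on a small $P$-ball around $z_\star$, which gives $\cC_\epsilon\subseteq\cM$. Each $\zbar\in\cC_\epsilon$ is a fixed point of $R$, so $R\zbar=\zbar$.

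\emph{Step 2: the bound \eqref{eq:normal-G}.} Firm nonexpansiveness \eqref{eq:fne} passes to the derivative: $\|G_{\zbar}u\|_P^2\le\inprod{G_{\zbar}u}{u}_P$, so $G_{\zbar}$ is firmly nonexpansive in the $P$-metric. Hence $\|G_{\zbar}u\|_P\le\|u\|_P$, and equality $\|G_{\zbar}u\|_P=\|u\|_P$ forces $G_{\zbar}u=u$.

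First restrict to $u\perp_P\cT_\cM$ with $\|u\|_P=1$. Suppose $\|(I-\proj^P)G_\star u\|_P=1$. Since $\|G_\star u\|_P\le 1$, this forces $\proj^P G_\star u=0$ and $\|G_\star u\|_P=1$. Then $G_\star u=u$, so $u\in\fix(G_\star)=\cT_\cM$ by \cref{lem:fixG}, contradicting $u\perp_P\cT_\cM$. The map $(\zbar,u)\mapsto\|(I-\proj^P)G_{\zbar}u\|_P$ is continuous, and $J_{\zbar}$ depends continuously on $\zbar$ because $\Zbar$ stays nonsingular. The set $\{u\perp_P\cT_\cM,\ \|u\|_P=1\}$ is compact. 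So a compactness argument gives some $\alpha_0<1$ bounding the quantity uniformly for $\zbar$ near $z_\star$, after shrinking $\epsilon$. Set $\alpha=\alpha_0$.

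\emph{Step 3: the remainder bound \eqref{eq:normal-r}.} $R$ is $C^1$ on a neighborhood of $z_\star$, because $\proj_{\SV{n}{+}}$ is smooth near the nonsingular $Z_\star$. Write $r_{\zbar}(u)=\int_0^1(\D R(\zbar+tu)-\D R(\zbar))u\,dt$. Uniform continuity of $\D R$ on a compact neighborhood makes $\|r_{\zbar}(u)\|_P\le\frac{1-\alpha}{2}\|u\|_P$ once $\delta$ and $\epsilon$ are small. Since $\alpha$ is fixed before $\delta$, there is no circularity.

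\emph{Step 4: the distance estimate \eqref{eq:normal-contraction}.} Given $z$ near $z_\star$, let $\zbar=\proj^P_{\aff\cM}(z)$ and $u=z-\zbar$. Because $\aff\cM=z_\star+\cT_\cM$, we have $u\perp_P\cT_\cM$. Also $\|\zbar-z_\star\|_P\le\|z-z_\star\|_P$, so $\zbar\in\cC_\epsilon\subseteq\cM$. This gives $\dist_P(z,\cM)\le\|u\|_P=\dist_P(z,\aff\cM)\le\dist_P(z,\cM)$, so all three are equal.

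Next, $\zbar+\proj^P G_{\zbar}u$ lies in $\aff\cM$ and is close to $z_\star$, so it lies in $\cM$. Using the expansion \eqref{eq:normal-expansion},
\[
\dist_P(Rz,\cM)\le\|(I-\proj^P)G_{\zbar}u+r_{\zbar}(u)\|_P\le\tfrac{1+\alpha}{2}\|u\|_P .
\]
Take $\rho=(1+\alpha)/2$.

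I expect Step 2 to be the main obstacle. It is the only step needing a real argument: firm nonexpansiveness of the derivative combined with \cref{lem:fixG}, together with the uniformity over $\zbar$, which requires continuity of $\Theta$ in the eigenvalues of $\Zbar$.
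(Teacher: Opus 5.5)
Your proposal is correct and follows essentially the same route as the paper: you pass firm nonexpansiveness of $R$ to $G_{\zbar}$, combine its equality case with \cref{lem:fixG}, use compactness to get a uniform $\alpha<1$, use uniform continuity of $\D R$ for the remainder, and decompose $P$-orthogonally onto $\aff\cM$ to obtain $\rho=(1+\alpha)/2$. The only minor variation is that you apply \cref{lem:fixG} just at $z_\star$ and get uniformity in $\zbar$ from continuity of $\zbar\mapsto G_{\zbar}$ by shrinking $\epsilon$, whereas the paper applies the lemma at every $\zbar\in\cC_\epsilon$ and maximizes directly over the compact set $\cD_\epsilon$; both work.
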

\begin{proof}
Since $\cM$ is relatively open in its affine hull and $z_\star \in \cM$,
there exists $\epsilon > 0$ such that 
\[
\cC_\epsilon := (\aff\cM) \cap \cB^P_\epsilon (z_\star) \subseteq \cM.
\] 
The set $\cC_\epsilon$ is compact, and the map $R$ is continuously
differentiable on an open neighborhood of $\cC_\epsilon$.
Moreover, every $\zbar \in \cC_\epsilon$ is a KKT point, so $R\zbar=\zbar$.

Fix $\zbar \in \cC_\epsilon$.  Applying firm nonexpansiveness of $R$
in the $P$-metric to the pair $\zbar+th$ and $\zbar$, dividing by~$t^2$,
and letting $t \downarrow 0$, gives
\[
  \|G_{\zbar}h\|_P^2 \le \inprod{G_{\zbar} h}{h}_P \quad \text{for all } h.
\]
Consequently,
\[
  \|G_{\zbar}h\|_P^2 \le \inprod{G_{\zbar}h}{h}_P
  \le \|G_{\zbar}h\|_P\|h\|_P,
\]
and hence $\|G_{\zbar} h\|_P \le \|h\|_P$.
Suppose equality holds for some nonzero $h$.  Then $G_{\zbar}h \ne 0$,
and equality must also hold in the second inequality above.  Thus
$G_{\zbar}h$ and $h$ are positively collinear in the $P$-inner product.
Since their $P$-norms are equal, we must have $G_{\zbar} h = h$.
By \cref{lem:fixG}, this implies $h \in \cT_\cM$.

We now prove that the contraction is uniform in $\zbar$.
Consider the compact set
\[
  \cD_\epsilon = \{(\zbar,u) \mid \zbar \in \cC_\epsilon, \,
  u \perp_P \cT_\cM, \, \|u\|_P=1\}.
\]
Then the function $(\zbar,u) \mapsto \|(I-\proj^P) G_{\zbar} u\|_P$
is continuous on $\cD_\epsilon$.  If its maximum were equal to one,
then for some $(\zbar,u) \in \cD_\epsilon$,
\[
  1 = \|(I-\proj^P) G_{\zbar} u\|_P \le \|G_{\zbar} u\|_P \le \|u\|_P = 1.
\]
Thus $\|G_{\zbar} u\|_P = \|u\|_P$.  By the equality case above,
$u \in \cT_\cM$, contradicting $u \perp_P \cT_\cM$ and $\|u\|_P=1$.
Hence the maximum is strictly less than one and can be denoted by
$\alpha \in (0,1)$.  By homogeneity, this gives
\eqref{eq:normal-G} for all $u \perp_P \cT_\cM$.

It remains to make the linear approximation uniform.
Since $R$ is continuously differentiable on a neighborhood of the compact
set $\cC_\epsilon$, its derivative is uniformly continuous there.
Hence there exists $\delta > 0$ such that, for all $\zbar \in \cC_\epsilon$
and all $u \perp_P \cT_\cM$ with $\|u\|_P \le \delta$, the residual
$r_{\zbar} (u) = R(\zbar+u) - \zbar - G_{\zbar} u$ satisfies
\[
  \|r_{\zbar}(u)\|_P \le \frac{1-\alpha}{2}\|u\|_P.
\]
This proves \eqref{eq:normal-expansion} and \eqref{eq:normal-r}.

We now prove the distance contraction.  Choose a neighborhood $\cO$ of
$z_\star$ such that, for every $z\in \cO$, the $P$-orthogonal projection
$\zbar := \proj^P_{\aff\cM}(z)$ belongs to $\cC_\epsilon$ and
the normal component $u:=z-\zbar$ satisfies $\|u\|_P \le \delta$.
Since $\aff\cM$ is affine, $u \perp_P \cT_\cM$.
Since also $\zbar \in \cM$, we have
\[
  \dist_P(z,\cM) = \dist_P(z,\aff\cM) = \|u\|_P.
\]
The first equality follows because $\cM$ agrees locally with
its affine hull around $\zbar$.

Using the expansion above, $Rz = \zbar + G_{\zbar}u + r_{\zbar}(u)$.
The vector $\proj^P (G_{\zbar} u + r_{\zbar} (u))$ belongs to $\cT_\cM$. 
Hence,
\[
  \zhat := \zbar + \proj^P (G_{\zbar} u + r_{\zbar}(u)) \in \aff\cM.
\]
By shrinking the neighborhood $\cO$ if necessary, we may ensure that
$\zhat \in \cM$, since $\cM$ is relatively open in $\aff \cM$ and
$\zbar \in \cC_\epsilon \subseteq \cM$.  Therefore,
\begin{align*}
  \dist_P(Rz,\cM) 
	&\le \|Rz - (\zbar + \proj^P (G_{\zbar} u + r_{\zbar}(u))) \|_P \\
  &= \|(I-\proj^P) (G_{\zbar} u + r_{\zbar}(u))\|_P \\
  &\le \|(I-\proj^P)G_{\zbar}u\|_P + \|(I-\proj^P) r_{\zbar}(u)\|_P \\
  &\le \alpha\|u\|_P+\frac{1-\alpha}{2}\|u\|_P \\
  &= \frac{1+\alpha}{2}\dist_P(z,\cM).
\end{align*}
Thus \eqref{eq:normal-contraction} holds on $\cO$
with $\rho=\frac{1+\alpha}{2}<1$.
\end{proof}

\subsection{Local linear convergence}

We now combine the preceding ingredients.  The local KKT set is a smooth
affine manifold in the relevant neighborhood, the derivative of the PDHG
map fixes precisely the tangent directions to this manifold, and the
normal component is uniformly contracted.  Since the iterates converge to
the strictly complementary limit point, they eventually enter this
neighborhood, and the normal-contraction estimate can be iterated.

\begin{theorem}[Local $R$-linear convergence under strict complementarity]
\label{thm:sc}
Suppose \cref{asp:prob,asp:sc} hold and
the PDHG stepsizes $\tau,\sigma$ satisfy \eqref{eq:stepsize}.
Let $\{(X_k,y_k)\}$ be the sequence generated by PDHG \eqref{eq:pdhg},
and set $S_k=C-\cA^\ast y_k$.
Let $z_k=(X_k,y_k)$ and $z_\star=(X_\star,y_\star)$,
and let $\cM$ be the local KKT manifold defined in~\eqref{eq:M}.
Then there exist $\rho_\SC \in (0,1)$ and an integer
$k_\SC \in \natint$ such that for all $k \ge k_\SC$,
\begin{equation} \label{eq:thm-dist-Qlin}
  \dist_P(z_{k+1},\cM) \le \rho_\SC \dist_P(z_k,\cM).
\end{equation}
Moreover, there exists $\kappa_\SC > 0$ such that for all $k \ge k_\SC$,
\begin{equation} \label{eq:thm-dist-M}
  \dist_P(z_k,\cM) \le \kappa_\SC \rho_\SC^{k-k_\SC},
\end{equation}
and
\begin{equation}\label{eq:thm-dist-Omega}
  \dist \big((X_k,y_k,S_k), \, \Omega_\star\big)
  \le \kappa_\SC \rho_\SC^{k-k_\SC}.
\end{equation}
Finally, it holds for all $k \ge k_\SC$ that
\begin{align}
  \|X_{k+1}-X_k\|_\fro + \|y_{k+1}-y_k\|_2 + \|S_{k+1}-S_k\|_\fro
  &\le \kappa_\SC \rho_\SC^{k-k_\SC}, \label{eq:thm-step} \\
  \|X_k-X_\star\|_\fro + \|y_k-y_\star\|_2 + \|S_k-S_\star\|_\fro
  &\le \kappa_\SC \rho_\SC^{k-k_\SC}. \label{eq:thm-point}
\end{align}
If additionally $(X_\star,y_\star,S_\star)$ is locally isolated
in~$\Omega_\star$, then $z_k$ converges $Q$-linearly to $z_\star$
in the $P$-metric; equivalently, $(X_k,y_k,S_k)$ converges $Q$-linearly to
$(X_\star,y_\star,S_\star)$.
\end{theorem}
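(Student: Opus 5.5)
The plan is to derive everything from the normal-contraction estimate \eqref{eq:normal-contraction} of \cref{lem:normal-contraction}, together with the Fej\'er monotonicity of PDHG in the $P$-metric.

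\emph{Step 1: the $Q$-linear decay \eqref{eq:thm-dist-Qlin}.} By \cref{asp:sc}, $z_k\to z_\star$. Hence there is $k_\SC$ such that $z_k\in\cO$ for all $k\ge k_\SC$, where $\cO$ is the neighborhood from \cref{lem:normal-contraction}. Applying \eqref{eq:normal-contraction} with $z=z_k$ gives \eqref{eq:thm-dist-Qlin} with $\rho_\SC=\rho$. Iterating yields \eqref{eq:thm-dist-M}, with $\kappa_\SC\ge \dist_P(z_{k_\SC},\cM)$.

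\emph{Step 2: the step bound \eqref{eq:thm-step}.} Let $\zbar_k\in\cM$ attain $\dist_P(z_k,\cM)$; shrinking $\cO$ if needed, this is the projection onto $\aff\cM$. Since $\zbar_k$ is a KKT point, $R\zbar_k=\zbar_k$. Nonexpansiveness of $R$ then gives
\[
\|z_{k+1}-z_k\|_P\le \|R z_k-\zbar_k\|_P+\|\zbar_k-z_k\|_P\le 2\dist_P(z_k,\cM).
\]
Equivalence of the $P$-norm with the Euclidean norm on $(X,y)$, together with $S_{k+1}-S_k=-\cA^\ast(y_{k+1}-y_k)$, converts this into \eqref{eq:thm-step} after enlarging $\kappa_\SC$.

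\emph{Step 3: \eqref{eq:thm-dist-Omega} and \eqref{eq:thm-point}.} The first follows from \cref{lem:local-geometry}: for $k\ge k_\SC$ the point $(\Xbar_k,\ybar_k,C-\cA^\ast\ybar_k)$ lies in $\Omega_\star$ and is within a constant multiple of $\dist_P(z_k,\cM)$ of $(X_k,y_k,S_k)$. For \eqref{eq:thm-point}, use the telescoping sum
\[
z_k-z_\star=\sum_{j\ge k}(z_j-z_{j+1}),
\]
which is valid because $z_j\to z_\star$. Step 2 then gives $\|z_k-z_\star\|_P\le \sum_{j\ge k}2\kappa\rho^{j-k_\SC}=\tfrac{2\kappa}{1-\rho}\rho^{k-k_\SC}$, and $S_k$ is handled as before. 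This telescoping step is the one point needing care. A distance bound to $\cM$ alone does not control the distance to the particular limit point $z_\star$, since iterates could drift along the manifold. Summable geometric step lengths rule this out.

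\emph{Step 4: the isolated case.} If $z_\star$ is locally isolated in $\Omega_\star$, then near $z_\star$ the set $\cM$ is the single point $\{z_\star\}$. A relatively open subset of an affine space that contains an isolated point must be zero-dimensional, so $\cT_\cM=\{0\}$. Consequently $\dist_P(z,\cM)=\|z-z_\star\|_P$ locally, and \eqref{eq:thm-dist-Qlin} reads $\|z_{k+1}-z_\star\|_P\le\rho_\SC\|z_k-z_\star\|_P$. The statement for $(X_k,y_k,S_k)$ follows because $S_k-S_\star=-\cA^\ast(y_k-y_\star)$, so the map $z\mapsto(X,y,C-\cA^\ast y)$ is an affine bi-Lipschitz correspondence. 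Note that $Q$-linearity transfers through such a correspondence only up to constants, so the claimed equivalence should be read in that sense, or else measured in the induced $P$-norm.

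The main obstacle is bookkeeping rather than any single estimate: choosing neighborhoods so that the projection $\zbar_k$ lies in $\cC_\epsilon\subseteq\cM$ and \cref{lem:normal-contraction} applies at every $k\ge k_\SC$. This is handled by taking $k_\SC$ large enough that $z_k\in\cO$ for all $k\ge k_\SC$.
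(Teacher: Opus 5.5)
Your proposal is correct and follows the paper's proof essentially step for step. The steps are: normal contraction from \cref{lem:normal-contraction} once $z_k$ enters the neighborhood; the bound $\|z_{k+1}-z_k\|_P\le 2\dist_P(z_k,\cM)$ via nonexpansiveness and the fixed point $\zbar_k=\proj^P_{\aff\cM}(z_k)\in\cM$; transfer to $\Omega_\star$ through \cref{lem:local-geometry}; tail summation of the geometric step lengths for the distance to $z_\star$; and $\cM=\{z_\star\}$ locally in the isolated case. Your caveat is also a fair sharpening of the paper's ``equivalently'': $Q$-linearity of $(X_k,y_k,S_k)$ holds in the norm induced by $P$, but in the standard product norm it holds only up to constants.
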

\begin{proof}
Since $z_k$ converges to $z_\star$,
\cref{lem:normal-contraction} applies for sufficiently large $k$.
Thus, there exists $k_\SC \in \natint$ such that
\[
  \dist_P(z_{k+1},\cM) \le \rho_\SC \dist_P(z_k,\cM) \quad 
  \text{for } k \ge k_\SC,
\]
for some $\rho_\SC \in (0,1)$.  Iteration gives~\eqref{eq:thm-dist-M},
with a suitable choice of the constant $\kappa_1$.

We next estimate the steps.  Let $\zbar_k = \proj_{\aff\cM}^P(z_k)$. 
For $k\ge k_\SC$, the local construction in \cref{lem:normal-contraction} 
gives
\[
  \zbar_k \in \cM, \qquad \|z_k - \zbar_k\|_P = \dist_P(z_k, \cM).
\]
Since $\zbar_k \in \cM$, it is a fixed point of $R$.
Therefore, the nonexpansiveness of $R$ in the $P$-metric yields
\begin{align*}
  \|z_{k+1}-z_k\|_P &= \|Rz_k-z_k\|_P \\
  &\le \|Rz_k - R\zbar_k\|_P + \|z_k - \zbar_k\|_P \\
  &\le 2\|z_k - \zbar_k\|_P = 2\dist_P(z_k,\cM).
\end{align*}
Together with~\eqref{eq:thm-dist-M}, this gives
$\|z_{k+1}-z_k\|_P \le \kappa_1 \rho_\SC^{k-k_\SC}$
for some $\kappa_1 \ge 0$.
Since $P$ is positive definite, the $P$-norm is equivalent to
the product norm on $\symm^n \times \R[m]$.  Together with
$S_{k+1}-S_k = -\cA^\ast(y_{k+1}-y_k)$, this gives~\eqref{eq:thm-step}.

We now pass from $\cM$ to the KKT set.  Write $\zbar_k = (\Xbar_k,\ybar_k)$
and $\Sbar_k = C - \cA^\ast \ybar_k$.  Since $\zbar_k \in \cM$, we have
$(\Xbar_k,\ybar_k,\Sbar_k) \in \Omega_\star$.
Hence, for a constant $\kappa_2 > 0$,
\begin{align*}
  \dist\big((X_k,y_k,S_k), \, \Omega_\star\big)
  &\le \|X_k-\Xbar_k\|_\fro +\|y_k-\ybar_k\|_2 +\|S_k-\Sbar_k\|_\fro \\
  &\le \kappa_2 \|z_k-\zbar_k\|_P = \kappa_2 \dist_P(z_k,\cM).
\end{align*}
Combining this bound with~\eqref{eq:thm-dist-M} proves
\eqref{eq:thm-dist-Omega}.

Finally, summing the tail of~\eqref{eq:thm-step} gives
\begin{align*}
  \MoveEqLeft[0.2]
  \|X_k-X_\star\|_\fro + \|y_k-y_\star\|_2 + \|S_k-S_\star\|_\fro \\
  &\le \sum_{j=k}^{\infty} \big(
    \|X_{j+1}-X_j\|_\fro +\|y_{j+1}-y_j\|_2 +\|S_{j+1}-S_j\|_\fro
  \big) \\
  &\le \kappa_3 \sum_{j=k}^{\infty}\rho_\SC^{j-k_\SC}
	\le \frac{\kappa_3}{1-\rho_\SC}\rho_\SC^{k-k_\SC},
\end{align*}
for some constant $\kappa_3 > 0$.  This proves~\eqref{eq:thm-point}.
(The constant $\kappa_\SC$ in the theorem statement can be chosen as
$\kappa_\SC = \max\{\kappa_1,\kappa_2,\kappa_3/(1-\rho_\SC)\}$.)

If $(X_\star,y_\star,S_\star)$ is locally isolated in $\Omega_\star$,
then $\cM=\{z_\star\}$ locally.  For all sufficiently large $k$,
$\dist_P(z_k,\cM)=\|z_k-z_\star\|_P$ and~\eqref{eq:thm-dist-Qlin} becomes
\[
  \|z_{k+1}-z_\star\|_P \le \rho_\SC \|z_k-z_\star\|_P.
\]
Thus $z_k$ converges $Q$-linearly to $z_\star$.  Since
$S_k-S_\star=-\cA^\ast(y_k-y_\star)$, the same conclusion holds for
$(X_k,y_k,S_k)$.
\end{proof}

\section{Local linear convergence under primal--dual nondegeneracy}
\label{sec:pdnd}

The previous section uses strict complementarity to obtain a smooth local
representation of the PSD-cone projection and then analyzes the derivative 
of the PDHG fixed-point map.
We now consider a different regularity regime.
When the limit KKT point is primal--dual nondegenerate,
the KKT mapping is strongly metrically subregular at the solution.
This error bound bypasses the need to differentiate
the PSD-cone projection: the usual proximal-point contraction argument,
applied to the $P$-resolvent representation of PDHG,
directly gives local $Q$-linear convergence to the KKT point.

\begin{theorem}%
[Local $Q$-linear convergence under primal--dual nondegeneracy]
\label{thm:pdnd}
Suppose \cref{asp:prob,asp:pdnd} hold and
the PDHG stepsizes $\tau,\sigma$ satisfy~\eqref{eq:stepsize}.
Then there exist $\rho_\ND \in (0,1)$, $\kappa_\ND > 0$,
and an integer $k_\ND$ such that, for all $k \ge k_\ND$,
\[
  \|(X_{k+1},y_{k+1}) - (X_\star,y_\star)\|_P 
  \le \rho_\ND \|(X_k,y_k) - (X_\star,y_\star)\|_P,
\]
and hence
\[
  \|X_k-X_\star\|_\fro + \|y_k-y_\star\|_2 + \|S_k-S_\star\|_\fro
  \le \kappa_\ND \rho_\ND^{k-k_\ND}.
\]
\end{theorem}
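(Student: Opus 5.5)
The plan is the standard proximal-point argument. The descent inequality comes from firm nonexpansiveness of the $P$-resolvent $R$, and the error bound \eqref{eq:pdnd-subreg} controls the distance to the solution by the step length.

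First, since $z_\star$ is a KKT point, $Rz_\star = z_\star$. Firm nonexpansiveness \eqref{eq:fne} with $z=z_k$ and $z'=z_\star$, expanded via the polarization identity, gives the descent inequality
\[
  \|z_{k+1}-z_\star\|_P^2 \le \|z_k-z_\star\|_P^2 - \|z_{k+1}-z_k\|_P^2 .
\]

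Second, I will bound the residual at $z_{k+1}$ by the step length. The inclusion \eqref{eq:ppm} says that $-P(z_{k+1}-z_k) \in F_\mathrm{KKT}(z_{k+1})$. Therefore
\[
  \dist_{P^{-1}}\big(0,F_\mathrm{KKT}(z_{k+1})\big) \le \|P(z_{k+1}-z_k)\|_{P^{-1}} = \|z_{k+1}-z_k\|_P .
\]
Since $z_k\to z_\star$ by \cref{asp:pdnd}, there is $k_\ND$ such that $\|z_{k+1}-z_\star\|_P\le\delta$ for all $k\ge k_\ND$. Then \eqref{eq:pdnd-subreg} yields $\|z_{k+1}-z_\star\|_P \le \mu\|z_{k+1}-z_k\|_P$.

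Third, I substitute this into the descent inequality:
\[
  \|z_{k+1}-z_\star\|_P^2 \le \|z_k-z_\star\|_P^2 - \mu^{-2}\|z_{k+1}-z_\star\|_P^2 .
\]
This gives $\|z_{k+1}-z_\star\|_P \le \rho_\ND\|z_k-z_\star\|_P$ with $\rho_\ND = (1+\mu^{-2})^{-1/2}\in(0,1)$. Iterating gives the bound $\|z_k-z_\star\|_P\le \rho_\ND^{k-k_\ND}\|z_{k_\ND}-z_\star\|_P$.

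The second displayed estimate of the theorem then follows in two steps. The $P$-norm is equivalent to the product norm on $\symm^n\times\R[m]$, and $S_k-S_\star=-\cA^\ast(y_k-y_\star)$. Together these absorb all constants into $\kappa_\ND$.

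The only delicate point is that the error bound is applied at $z_{k+1}$, not at $z_k$. The residual is naturally available at $z_{k+1}$, so the neighborhood condition must be checked there, and convergence of the whole sequence guarantees it. Strong metric subregularity is what makes the conclusion $Q$-linear convergence to the particular point $z_\star$, rather than merely convergence of the distance to $\Omega_\star$. I also need the dual-norm pairing to be correct: $\|Pw\|_{P^{-1}}^2=\inprod{Pw}{P^{-1}Pw}=\|w\|_P^2$. This identity is exactly why \eqref{eq:pdnd-subreg} was stated with the $P^{-1}$ distance.
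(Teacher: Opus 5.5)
Your proposal is correct and matches the paper's proof essentially step for step. It combines the Pythagorean descent inequality from firm nonexpansiveness, the admissible residual $P(z_k-z_{k+1})\in F_\mathrm{KKT}(z_{k+1})$ with $\|Pw\|_{P^{-1}}=\|w\|_P$, and strong metric subregularity applied at $z_{k+1}$, giving the same rate $\rho_\ND=\mu/\sqrt{1+\mu^2}$. The only difference is cosmetic: the paper secures the neighborhood condition for $Rz$ through nonexpansiveness of $R$ near $z_\star$, while you take it directly from convergence of the sequence, and both are valid.
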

\begin{proof}
Let $(X_\star,y_\star,S_\star)$ be the limit point of PDHG.
From \cref{sec:pdnd-def}, primal--dual nondegeneracy implies that
the KKT point $(X_\star,y_\star,S_\star)$ is unique 
and that the KKT mapping $F_\mathrm{KKT}$ is strongly metrically subregular
at $((X_\star,y_\star),0)$; \ie, \eqref{eq:pdnd-subreg} holds.

Let $z_+=Rz$.  The inclusion \eqref{eq:ppm} implies that
$P(z-z_+) \in F_\mathrm{KKT}(z_+)$.  This means that $P(z - z_+)$
is an admissible residual of the KKT mapping at $z_+$:
\[
\dist_{P^{-1}} \big(0, F_\mathrm{KKT} (z_+) \big)
\le \|P(z - z_+)\|_{P^{-1}}.
\] 
Since $R$ is nonexpansive in the $P$-norm and $Rz_\star=z_\star$,
there is a neighborhood $\cO$ of $z_\star$ such that
$Rz$ belongs to the neighborhood in which \eqref{eq:pdnd-subreg} holds
whenever $z \in \cO$.  Hence, for all $z \in \cO$,
\begin{equation} \label{eq:pdnd-res}
  \|z_+ - z_\star\|_P \le \mu \|P(z - z_+)\|_{P^{-1}} = \mu\|z - z_+\|_P.
\end{equation}

On the other hand, firm nonexpansiveness of $R$ and $z_\star \in \fix(R)$
give
\[
  \inprod{z_+ - z_\star}{z - z_+}_P \ge 0.
\] 
Expanding $z - z_\star = (z_+ - z_\star) + (z - z_+)$ yields
\begin{equation} \label{eq:pdnd-pythagorean}
  \|z_+ - z_\star\|_P^2 + \|z - z_+\|_P^2 \le \|z-z_\star\|_P^2.
\end{equation}
Combining \eqref{eq:pdnd-res} and \eqref{eq:pdnd-pythagorean} yields
\[
  (1+\mu^{-2}) \|z_+ - z_\star\|_P^2 \le \|z-z_\star\|_P^2.
\]
Thus, with $\rho_\ND = \mu/\sqrt{1+\mu^2} < 1$,
\[
  \|Rz - z_\star\|_P \le \rho_\ND \|z-z_\star\|_P
\]
for all $z$ sufficiently close to $z_\star$.  Since $z_k\to z_\star$, the
estimate applies for all sufficiently large $k$ and gives the asserted
$Q$-linear convergence.  The estimate for $S_k$ follows from
$S_k-S_\star=-\cA^\ast (y_k-y_\star)$ and the equivalence of norms.
\end{proof}

\section{Discussion} \label{sec:discussion}

\Cref{sec:sc,sec:pdnd} establish local linear convergence of PDHG for SDP
under two different regularity assumptions:
strict complementarity and primal--dual nondegeneracy.
We now discuss the scope of these results and
the limitations of the arguments used to prove them.
We begin with a simple SDP instance showing that,
without such regularity, PDHG may converge only sublinearly.
We then compare the proof mechanism with the polyhedral behavior of PDHG
for LP and with related ADMM analyses for SDP.
We also show that PDHG exhibits a finite-time rank identification
phenomenon, parallel to basis identification for LP
and related identification properties of ADMM for SDP.
Finally, we close with several open questions.

\subsection{A sublinear example without regularity}

From \cref{thm:sc,thm:pdnd}, local linear convergence follows from
regularity conditions such as strict complementarity or primal--dual
nondegeneracy.
The next example suggests that some regularity is in fact needed.
It gives a three-dimensional SDP for which PDHG converges to
a unique KKT point,
but the limit point is degenerate and not strictly complementary,
and the convergence along a slow branch is only sublinear.
The mechanism is the nonlinear behavior of the PSD-cone projection at a
singular matrix.
\begin{example}[Sublinear convergence] \label{ex:sublinear}
Consider the following primal SDP
\[
\begin{array}{ll}
  \mini & X_{33} \\
  \st   & X_{11}=1, \;\; X_{22} + (X_{13}+X_{31})/2 = 0 \\
		& X \in \SV{3}{+}.
\end{array}
\]
The unique solution is $X_\star = E_{11}$, $y_\star=0$,
and $S_\star=E_{33}$, which fails strict complementarity.
The point also fails dual nondegeneracy, because 
$E_{12} \in \Nullspace(\cA) \cap \cT_{S_\star}^\perp$.

Take $\tau=\sigma=1/2$.  The purpose of the following calculation is to
give a formal local asymptotic reduction of the PDHG map 
near the degenerate KKT point and identify the leading slow dynamics.
We seek a local one-dimensional expansion, parametrized by $\alpha=X_{12}$,
of the form
\[
  X(\alpha) = \begin{bmatrix}
    1+O(\alpha^4) & \alpha & -\alpha^2+O(\alpha^4)\\
    \alpha & \alpha^2+O(\alpha^4) & -\alpha^3+O(\alpha^5)\\
    -\alpha^2+O(\alpha^4) & -\alpha^3+O(\alpha^5) &
      \alpha^4+O(\alpha^6)
  \end{bmatrix}, \qquad y(\alpha) =
  \bigl(-3\alpha^4+O(\alpha^6), \, -2\alpha^2+O(\alpha^4)\bigr).
\]
Substitution of these expansions into one PDHG step gives the leading-order
behavior of the reduced coordinate.
Indeed, let $Z(\alpha) = X(\alpha) - \tau (C - \cA^\ast y(\alpha))$.  Then,
\[
  Z(\alpha) = \begin{bmatrix}
    1+O(\alpha^4) & \alpha & -\frac32\alpha^2+O(\alpha^4) \\
    \alpha & O(\alpha^4) & -\alpha^3+O(\alpha^5) \\
    -\frac32\alpha^2+O(\alpha^4) & -\alpha^3+O(\alpha^5) &
      -\frac12+O(\alpha^4)
  \end{bmatrix}.
\]
For sufficiently small $\alpha>0$, the matrix $Z(\alpha)$ has exactly
one positive eigenvalue.  If the corresponding eigenvector is normalized to
have first component one, then
\[
  v(\alpha) = \begin{bmatrix}
    1 \\ \alpha-\alpha^3+O(\alpha^5) \\ -\alpha^2+O(\alpha^4)
  \end{bmatrix}, \qquad \lambda_+(\alpha) = 1+\alpha^2+O(\alpha^4).
\]
Direct calculation then shows that
\[
  \proj_{\SV{3}{+}}(Z(\alpha)) 
	= \lambda_+(\alpha) \frac{v(\alpha)v(\alpha)^\tran}{\|v(\alpha)\|^2} 
	= X(\alpha_+), \quad \text{where }
  \alpha_+ = \big(\proj_{\SV{3}{+}} (Z(\alpha)) \big)_{12}
	= \alpha - \alpha^3 + O(\alpha^5).
\]
The dual update gives the same reduced recursion to the displayed order.
Thus the formal reduced dynamics along the slow degenerate direction is
\[
  \alpha_+ = \alpha - \alpha^3 + O(\alpha^5).
\]
This scalar recursion predicts sublinear decay.
Indeed, if a positive sequence satisfies
\[
  \alpha_{k+1}=\alpha_k-\alpha_k^3+O(\alpha_k^5), \qquad
  \alpha_k\downarrow 0,
\]
then
\[
  \frac{1}{\alpha_{k+1}^2} = \frac{1}{\alpha_k^2} + 2 + O(\alpha_k^2).
\]
Equivalently, the leading-order asymptotics are
$\alpha_k \sim \frac{1}{\sqrt{2k}}$.
Moreover, along the above formal expansion,
\[
  \|X(\alpha)-X_\star\|_\fro^2 = 2\alpha^2+O(\alpha^4), \qquad
  \|y(\alpha)-y_\star\|_2=O(\alpha^2).
\]
Hence the formal model predicts
\[
  \|X_k-X_\star\|_\fro = \Theta(k^{-1/2})
\]
along the slow direction.

The above calculation provides a local asymptotic explanation of the
slow dynamics; the numerical evidence in \cref{fig:sublinear}
is consistent with the formal prediction:
it shows $k^{-1/2}$ decay of the primal error and stabilization of 
the scaled quantity $\sqrt{k}\,\|X_k-X_\star\|_\fro$.
\end{example}

\begin{figure}[t]
  \centering
  \includegraphics[width=0.7\textwidth]{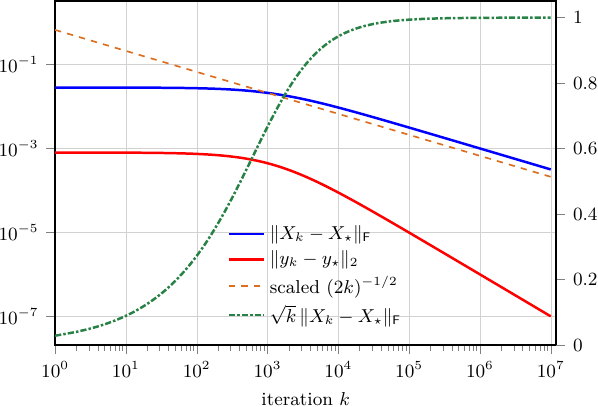}
  \caption{Numerical behavior of PDHG for the SDP in \cref{ex:sublinear}.
    The left vertical axis uses a logarithmic scale and shows the distances
    to the KKT point; the dashed curve is a scaled multiple of
    $(2k)^{-1/2}$.  The right vertical axis uses a linear scale and shows
    the diagnostic $\sqrt{k}\,\|X_k-X_\star\|_\fro$, whose stabilization is
    consistent with $\|X_k-X_\star\|_\fro\asymp k^{-1/2}$.
  }
  \label{fig:sublinear}
\end{figure}

\subsection{Comparison with related linear convergence results}
\label{sec:comparison}

\Cref{ex:sublinear} illustrates a basic difference 
between linear programming and semidefinite programming.  For LP,
the cone is polyhedral and the projection onto the nonnegative orthant 
is piecewise affine.  Once the active set has been identified,
PDHG reduces to a linear fixed-point iteration on a fixed subspace.
Even before this identification occurs, Hoffman-type error bounds provide
a global polyhedral error-bound mechanism.
This mechanism, however, is not available for SDP.
At a singular matrix, the projection onto the PSD cone has a genuinely 
nonlinear zero-eigenvalue block.
Strict complementarity bypasses this obstruction
by making $X_\star-\tau S_\star$ nonsingular,
so that the PSD-cone projection is differentiable in a neighborhood of
the point at which it is evaluated.  Primal--dual nondegeneracy removes
the obstruction in a different way: it gives a local error bound 
through metric subregularity of the KKT mapping.

The comparison with ADMM is more delicate.
Recent analyses of ADMM for SDP \cite{KJY26} study the Douglas--Rachford
form of ADMM in the signed matrix variable $Z=X-\sigma S$.
The local linearization then acts on $Z$, and the fixed space of 
the corresponding linear map describes the directions that are not
contracted by the first-order model.  In the notation of those analyses,
a direction belongs to this fixed space precisely when its off-diagonal
block vanishes and its two diagonal blocks satisfy one condition in
$\Nullspace(\cA)$ and one condition in $\Range(\cA^\ast)$.
This characterization plays the same structural role as \cref{lem:fixG}:
both identify the neutral directions of the local linearization.
Under primal--dual nondegeneracy this fixed space is trivial.
Without nondegeneracy, one cannot expect contraction in every direction,
and the analysis must instead prove contraction
only in the component normal to the local solution set.

There are, however, important differences between the two arguments.  The
ADMM linearization is expressed in the single matrix variable $Z$ 
and uses the Frobenius metric together with the orthogonal projector onto
$\Range(\cA^\ast)$.  The PDHG linearization acts on the pair $(X,y)$.
In the Euclidean metric it is generally non-self-adjoint, and its natural
nonexpansiveness property is expressed in the block metric~\eqref{eq:P}.
Consequently, the PDHG proof contracts the $P$-orthogonal component normal
to the local KKT manifold, rather than an ordinary Frobenius-orthogonal
component in the $Z$-variable.  Thus the fixed-space calculations in the
ADMM and PDHG analyses are analogous, but the residual estimates and the
underlying metrics are method-specific.

\subsection{Rank identification}

PDHG for linear programming, viewed as a special case of SDP,
exhibits a two-stage behavior \cite{LY24}.
In the first stage, the method identifies the optimal basis 
in finitely many iterations; this stage may be sublinear.
After identification, the iterates remain on a fixed polyhedral stratum,
and the second stage is linear,
with a rate governed by the local sharpness constant.
For SDP, the analogous issue is whether PDHG identifies,
in finite time, the \textit{rank} of the solution to which it converges.

In this subsection we give a partial answer.  We show that, once the PDHG
iterates are sufficiently close to a strictly complementary limit point,
the rank of the primal variable is identified in finitely many iterations 
and then remains constant.  Equivalently, if $X_k \to X_\star$, then
$\rank X_k = \rank X_\star$ for all sufficiently large $k$.
This finite identification property also follows from the general theory
of partial smoothness \cite{DL14,Lewis02,Wright93},
since the relative interiors of faces of the PSD cone 
form the relevant active manifolds.  Here we give a direct proof for SDP,
based only on the spectral form of the projection onto the PSD cone.

\begin{proposition}[Finite-time rank identification] \label{prop:rank-id}
Suppose \cref{asp:prob,asp:sc} hold.  Let $r = \rank X_\star$, and define
\[
  Z_k = X_k-\tau(C-\cA^\ast y_k) = X_k-\tau S_k, \qquad \text{and} \qquad
  \Shat_{k+1} = \tau^{-1} (X_{k+1} - Z_k).
\]
Then $X_{k+1} = \proj_{\SV{n}{+}} (Z_k)$, and there exists an integer
$k_\ID$ such that, for all $k \ge k_\ID$,
\[
  \inertia (Z_k) = (r,n-r,0), \quad \rank(X_{k+1}) = r, \quad
  \rank(\Shat_{k+1}) = n-r, \quad \Shat_{k+1} \succeq 0, \quad
  \inprod{X_{k+1}}{\Shat_{k+1}} = 0.
\]
\end{proposition}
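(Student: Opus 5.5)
The plan is to read every claimed property off the spectral decomposition of $Z_k$, using only continuity of eigenvalues and the Moreau decomposition. The identity $X_{k+1}=\proj_{\SV{n}{+}}(Z_k)$ is just a restatement of \eqref{eq:pdhg-X}, since $S_k=C-\cA^\ast y_k$ gives $Z_k=X_k-\tau(C-\cA^\ast y_k)$. By \cref{asp:sc}, $z_k\to z_\star$, and hence $S_k\to S_\star$ and $Z_k\to Z_\star=X_\star-\tau S_\star$.

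\textbf{Step 1: inertia of $Z_\star$.} By \eqref{eq:eigen} with $Q_\star=I$ and strict complementarity \eqref{eq:sc} (so $s=n-r$), we have
\[
Z_\star=\begin{bmatrix}\Lambda_X&0\\0&-\tau\Lambda_S\end{bmatrix}.
\]
This matrix has $r$ eigenvalues bounded below by $\lambda_r>0$ and $n-r$ eigenvalues bounded above by $-\tau\min\operatorname{eig}(\Lambda_S)<0$. Let $\gamma>0$ be half the smaller of these two gaps.

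\textbf{Step 2: inertia of $Z_k$.} Weyl's inequality gives $|\lambda_i(Z_k)-\lambda_i(Z_\star)|\le\|Z_k-Z_\star\|_\fro$. Choose $k_\ID$ so that $\|Z_k-Z_\star\|_\fro<\gamma$ for all $k\ge k_\ID$. Then $Z_k$ has exactly $r$ positive eigenvalues and $n-r$ negative ones, and none vanish, so $\inertia(Z_k)=(r,n-r,0)$. This is the only step that requires the limit point, and strict complementarity is exactly what rules out eigenvalues of $Z_\star$ at zero, where continuity could not fix their sign. The main obstacle is therefore already absorbed by \cref{asp:sc}; the remaining work is bookkeeping.

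\textbf{Step 3: the projection and $\Shat_{k+1}$.} Write $Z_k=Q_k\diag(\lambda^{(k)})Q_k^\tran$. Then
\[
X_{k+1}=Q_k\diag(\max(\lambda^{(k)},0))Q_k^\tran,
\]
which has rank $r$. By the Moreau decomposition $Z_k=\proj_{\SV{n}{+}}(Z_k)-\proj_{\SV{n}{+}}(-Z_k)$, we get
\[
\Shat_{k+1}=\tau^{-1}(X_{k+1}-Z_k)=\tau^{-1}\proj_{\SV{n}{+}}(-Z_k)=\tau^{-1}Q_k\diag(\max(-\lambda^{(k)},0))Q_k^\tran.
\]
This matrix is PSD, has rank $n-r$ because exactly $n-r$ eigenvalues are negative, and satisfies $\inprod{X_{k+1}}{\Shat_{k+1}}=0$ because the two matrices are diagonal in the same basis with disjoint supports.

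Beyond Step 2, one could additionally note that $\Shat_{k+1}-S_{k+1}=\cA^\ast(y_{k+1}-y_k)\to0$. This is not needed for the stated claims.
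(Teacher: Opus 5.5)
Your proof is correct and follows essentially the same route as the paper: $Z_k\to Z_\star$, then Weyl's inequality with the spectral gap of $Z_\star$ that strict complementarity guarantees, then reading $X_{k+1}$ and $\Shat_{k+1}$ off the eigendecomposition of $Z_k$. Writing $\Shat_{k+1}=\tau^{-1}\proj_{\SV{n}{+}}(-Z_k)$ via the Moreau decomposition is only a repackaging of the paper's direct spectral formula for $\Shat_{k+1}$.
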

\begin{proof}
Since $(X_k,y_k,S_k)$ converges to $(X_\star,y_\star,S_\star)$,
it holds that $Z_k \to Z_\star$.  The eigenvalues of $Z_\star$ consist of
the positive eigenvalues of $X_\star$ and the negative eigenvalues
$-\tau\lambda_i(S_\star)$.  Let
\[
  \gamma = \min\{ \lambda_\mathrm{min} (X_{\star,11}), \,
	\tau\lambda_\mathrm{min} (S_{\star,22}) \} > 0
\]
in the block representation \eqref{eq:eigen}.
For sufficiently large $k$, $\|Z_k-Z_\star\|_2<\gamma/2$.
Weyl's inequality then gives $\inertia(Z_k) = (r,n-r,0)$.
Since the PSD projection keeps exactly the positive spectral part,
$\rank(X_{k+1})=r$.

If $Z_k = Q \diag(\lambda_1,\ldots,\lambda_n) Q^\tran$ is
the eigen-decomposition, then
\[
  X_{k+1} = Q \diag(\lambda_1^+,\ldots,\lambda_n^+) Q^\tran, \qquad
  \Shat_{k+1} = \tau^{-1} 
  Q \diag((-\lambda_1)^+,\ldots,(-\lambda_n)^+) Q^\tran,
\]
where $t^+ = \max\{t,0\}$.  The desired properties follow immediately.
\end{proof}

The proof only uses the spectral gap
$\lambda_r(Z_\star) > 0 > \lambda_{r+1}(Z_\star)$,
where $Z_\star=X_\star-\tau S_\star$.
For complementary $X_\star,S_\star \succeq 0$, 
this condition is equivalent to strict complementarity.
\Cref{prop:rank-id} shows that the primal sequence $\{X_k\}$ identifies
the rank $r = \rank X_\star$ after finitely many iterations.
However, the dual slack iterates $S_k = C-\cA^\ast y_k$ may not have
the identification property: they need not be positive semidefinite,
and indeed they may fail to be so infinitely often.
\Cref{ex:rank-id} gives such an example.

\begin{example} \label{ex:rank-id}
Consider the SDP
\[
\begin{array}{ll}
  \mini & 0 \\
  \st   & X_{11}=1, \;\; X_{12}=0, \;\; X_{22}=1 \\
		& X \in \SV{2}{+}.
\end{array}
\]
The unique KKT point is $X_\star=I_2$, $y_\star=0$, $S_\star=0$, which is
strictly complementary.  Take $\tau=1$, $\sigma=3/4$ and initialize at
\[
  X_0 = \begin{bmatrix} 1 - \epsilon/\sqrt{3} & 0 \\ 0 & 1 \end{bmatrix},
  \qquad y_0 = 0.
\]
where $\epsilon>0$ is small.  The iterates remain in the interior of
$\SV{2}{+}$ and can be written concisely as
\[
  X_k = \begin{bmatrix} 1+\alpha_k & 0 \\ 0 & 1 \end{bmatrix}, \qquad
  y_k = \begin{bmatrix} \beta_k \\ 0 \\ 0 \end{bmatrix},
\] 
where the two sequences $\{\alpha_k\}$ and $\{\beta_k\}$ satisfy
$\alpha_{k+1} = \alpha_k + \beta_k$ and
$\beta_{k+1} = -\frac{3}{4} \alpha_k - \frac{1}{2} \beta_k$
with $\alpha_0 = -\epsilon/\sqrt{3}$ and $\beta_0 = 0$.
Eliminating $\alpha_k$ gives
\[
  \beta_0 = 0, \qquad \beta_1 = \epsilon \frac{\sqrt 3}{4}, \qquad
  \beta_{k+2} - \frac{1}{2} \beta_{k+1} + \frac{1}{4} \beta_k = 0
  \text{ for } k \in \natint.
\] 
Direct calculation shows that
\[
  \beta_k = \epsilon 2^{-k} \sin \frac{k\pi}{3}.
\] 
Thus, $S_k = C - \cA^\ast y_k = -\beta_k E_{11}$.
So $S_k=0$ for $k=0,3,6,\ldots$, while $S_k \neq 0$ for infinitely many
other $k$.  Therefore, the rank of $S_k$ never stabilizes to
$\rank S_\star = 0$.
\end{example}

From \cref{thm:sc} and \cref{prop:rank-id}, both local linear convergence
and finite rank identification hold for PDHG under strict complementarity.
The arguments, however, do not establish a causal relation between the two
properties.  The rank identification of $X_k$ and $\Shat_k$ follows from
convergence to a strictly complementary solution;
no rate estimate is used in this argument.

Conversely, rank identification alone does not imply local linear
convergence.  After the rank has been identified, one still needs a
contraction estimate, an error bound, or a growth condition to control the
remaining smooth dynamics.  Thus, in contrast with the polyhedral case of
PDHG for LP, the relation between identification and the onset of a linear
rate is less transparent for SDP.  The thresholds are governed by different
quantities: rank identification depends on the spectral gap of
\(Z_\star\), while local linear convergence also depends on the contraction
of the linearized PDHG map in directions normal to the local KKT set.

\subsection{Open questions}

Several questions remain open.
First, our proof uses the stepsize condition $\tau\sigma\|\cA\|^2 < 1$
and the fully extrapolated update $2X_{k+1} - X_k$.
However, the relaxed PDHG iteration
\begin{align*}
  X_{k+1} &= \proj_{\SV{n}{+}} 
    \big(X_k - \tau (C - \cA^\ast y_k) \big) \\
  y_{k+1} &= y_k + \sigma
	\big(b - \cA(X_{k+1} + \theta (X_{k+1}-X_k)) \big)
\end{align*}
is known to converge for $\theta > 1/2$ and
$\tau \sigma \|\cA\|^2 < 4 /(1+2\theta)$ \cite{BUG26};
see also \cite{Upadhyaya26} for a different admissible range.
It is unclear whether the present argument extends to these larger
parameter ranges.  In particular, the present proof relies heavily on
the proximal point method interpretation of PDHG, whereas the analysis
of relaxed PDHG uses a different Lyapunov function.

Second, the available analyses of ADMM \cite{KJY26,KY26} and PDHG for SDP
appear to use different mechanisms. 
Both methods are primal--dual proximal methods, and both analyses
ultimately depend on the local geometry of the PSD-cone projection.
Nevertheless, the ADMM arguments are typically expressed through
the signed matrix variable associated with the Douglas--Rachford operator,
while the PDHG argument in this paper uses the $P$-metric resolvent.
It would be useful to know whether this distinction is specific to ADMM and
PDHG, or whether it reflects a more general classification of primal--dual
proximal methods (see, \eg, \cite{CY21,HMXY22,LL24,MCJH23}).

Third, strict complementarity and primal--dual nondegeneracy both lead to 
local linear convergence, but through different mechanisms.
At present, it is not clear whether either condition captures
the exact regularity needed for linear convergence, or whether there is 
a sharper necessary-and-sufficient condition intrinsic to 
the local KKT geometry and the PDHG fixed-point map.
Both assumptions are also a posteriori:
they are imposed at the limit point selected by the algorithm,
rather than at an arbitrary optimal solution.
This is natural, since the local rate is determined by the geometry
near the point to which the iterates converge.
Nevertheless, it leaves open a more global question:
how do primal--dual first-order methods for SDP select
a particular KKT point, and how is this selection affected by 
the curvature and geometry of the positive semidefinite cone?

\section{Numerical experiments} \label{sec:numerics}

In this section, we present numerical evidence supporting the theoretical
findings of the paper.  The experiments are designed to illustrate
the following two phenomena.
\begin{enumerate}
\item Local ($R$-)linear convergence is observed under different
combinations of primal--dual nondegeneracy (ND) and
strict complementarity (SC).

\item When SC is close to failure, PDHG applied to SDP may converge
extremely slowly, and no clear linear regime may be observed
within the prescribed computational budget.
\end{enumerate}
All numerical experiments were conducted on a Mac mini equipped with
an Apple M4 Pro chip and 48 GB of unified memory, running macOS 15.7.4.
The algorithms were implemented in Python.
The code and synthetic SDP data used in the experiments are available at
\begin{center}
\url{https://github.com/NumOptLab/pdhg-sdp-linear-conv}.
\end{center}

For the primal--dual pair of SDPs in \eqref{eq:sdp},
we define primal infeasibility $\epsilon_\mathrm{p}$,
dual infeasibility $\epsilon_\mathrm{d}$,
complementarity residual $\epsilon_\mathrm{c}$,
and relative objective gap $\epsilon_\mathrm{gap}$ by
\[
  \epsilon_\mathrm{p} = \frac{\|\cA X-b\|_2}{1 + \|b\|_2}, \;\;
  \epsilon_\mathrm{d} = \max\left\{0,
	\frac{-\lambda_\mathrm{min}(S)}{1 + \|C\|_\fro} \right\}, \;\;
  \epsilon_\mathrm{c} = \frac{\inprod{X}{\proj_{\SV{n}{+}}(S)}}%
	{1+|\inprod{C}{X}|+|\inprod{b}{y}|}, \;\;
  \epsilon_\mathrm{gap} = \frac{|\inprod{C}{X}-b^\tran y|}%
	{1+|\inprod{C}{X}|+|\inprod{b}{y}|}.
\] 
The overall KKT residual is then defined as $\epsilon_\mathrm{KKT} := \max
\{ \epsilon_\mathrm{p}, \epsilon_\mathrm{d}, \epsilon_\mathrm{c},
\epsilon_\mathrm{gap} \}$.  Unless otherwise specified, the algorithm is
terminated when $\epsilon_\mathrm{KKT} \le 10^{-8}$ or 
when the iteration count reaches $10^5$.  The SDP instances used
in the experiments are summarized in \cref{tab:data}.
Strict complementarity is assessed numerically by computing
the smallest absolute eigenvalue of the final $Z$-iterate.
\begin{table}[t]
\centering
\caption{Basic information on the tested SDP instances.}
\label{tab:data}
\vspace{1.5ex}
\begin{tabular}{lrrrr@{\hspace{4.0em}}lrrrr} \toprule
Instance & $n$ & $m$ & $\tau$ & $\sigma$
& Instance & $n$ & $m$ & $\tau$ & $\sigma$ \\ \midrule
\texttt{maxG*}          & 800 &    800 & 1.00 & 1.00
& \texttt{XM-48}        & 144 &    241 & 0.85 & 0.85 \\
\texttt{QS-20}          & 231 &  16402 & 0.40 & 0.40
& \texttt{XM-93}        & 279 &    466 & 0.85 & 0.85 \\
\texttt{QS-30}          & 496 &  77377 & 0.12 & 1.20
& \texttt{XM-149}       & 447 &    746 & 0.70 & 0.70 \\
\texttt{QS-40}          & 861 & 236202 & 0.10 & 1.00
& \texttt{rose13}       & 105 &   2379 & 0.30 & 0.30 \\
\texttt{pdnd-nosc-100}  & 100 &    136 & 0.50 & 0.50
& \texttt{cnhil10}      & 220 &   5005 & 0.35 & 0.35 \\
\texttt{pdnd-nosc-200}  & 200 &    210 & 0.50 & 0.50
& \texttt{BQP-r1-20-1}  & 231 &  20601 & 0.30 & 0.30 \\
\texttt{pdnd-nosc-500}  & 500 &    378 & 0.50 & 0.50
&                       &     &        &      &      \\ \bottomrule
\end{tabular}
\end{table}

\subsection{Demonstration of local linear convergence}

We first report SDP instances for which some regularity conditions in our
analysis appear to hold numerically.  The test set contains examples from
several application domains, together with synthetic instances constructed
to separate the roles of strict complementarity and primal--dual
nondegeneracy.  In all these experiments, PDHG exhibits a clear local
linear convergence regime after a transient phase.  We also observe
finite identification of the numerical rank of the primal iterate.

\paragraph{MaxCut SDP: SC holds and primal--dual ND holds.}
\Cref{fig:maxcut}%
\begin{figure}[tb]
\centering
\includegraphics[width=\textwidth]{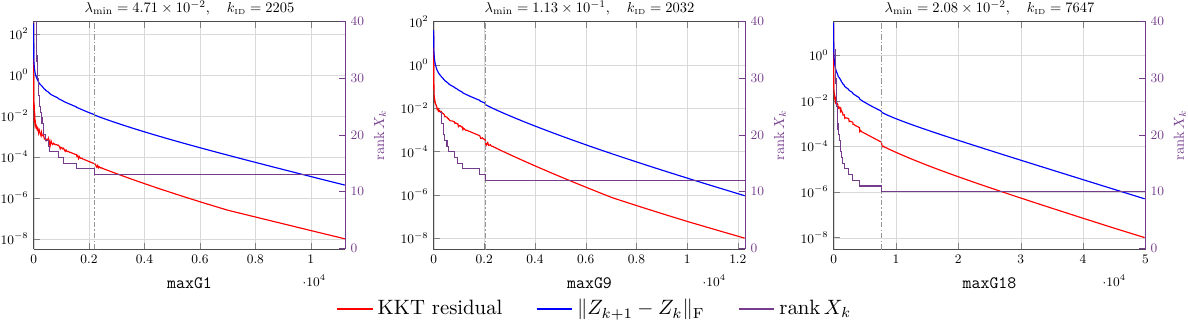}
\caption{PDHG convergence on MaxCut SDP instances.
The red and blue curves report the KKT residual and the fixed-point
difference, respectively, on a logarithmic scale; the purple curve shows
$\rank X_k$ on the right axis in the standard scale.  The dash-dotted 
vertical line marks the rank-identification iteration $k_\ID$,
and $\lambda_\mathrm{min}$ denotes the smallest absolute eigenvalue 
of the final $Z$-iterate.}
\label{fig:maxcut}
\end{figure}
reports three representative MaxCut SDP instances.
For all three cases, strict complementarity and primal--dual nondegeneracy 
appear to hold numerically.  Consistent with the theory, PDHG enters
a local linear convergence regime after a short transient phase.

The three panels show the evolution of the KKT residual, the consecutive
fixed-point difference $\|Z_{k+1}-Z_k\|_\fro$, and the rank of $X_k$.
In each case, the residual and the fixed-point difference eventually
decrease at an approximately linear rate on the semilogarithmic scale.
The rank of $X_k$ also stabilizes after finitely many iterations,
indicating identification of the numerical rank of the primal solution.
The reported values of $\lambda_\mathrm{min}$ are bounded away from zero,
which is consistent with strict complementarity, while $k_\ID$ indicates
that rank stabilization occurs well before high-accuracy termination.

\paragraph{Quartic function over sphere (QS):
SC holds and primal ND fails.}
This testbed arises from a classical polynomial optimization problem
and its second-order SDP relaxation; see, \eg, \cite{WH25,YLCT23}.
For this relaxation, the primal solution is unique and has rank one,
while primal nondegeneracy always fails.
Thus these instances provide a useful testbed for the behavior of PDHG
in the absence of one of the nondegeneracy conditions.

\Cref{fig:qs}%
\begin{figure}[tb]
\centering
\includegraphics[width=\textwidth]{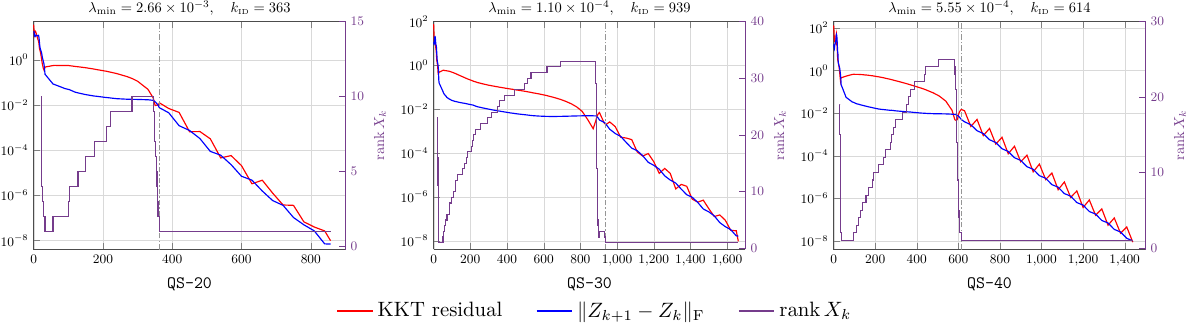}
\caption{PDHG convergence on random QS problems.
The red and blue curves report the KKT residual and the fixed-point
difference, respectively, on a logarithmic scale; the purple curve shows
$\rank X_k$ on the right axis in the standard scale.  The dash-dotted
vertical line marks the rank-identification iteration $k_\ID$,
and $\lambda_\mathrm{min}$ denotes the smallest absolute eigenvalue 
of the final $Z$-iterate.}
\label{fig:qs}
\end{figure}
reports three representative instances.
In all three cases, strict complementarity appears to hold numerically,
as indicated by the smallest absolute eigenvalue reported above the panels.
Despite the failure of primal nondegeneracy, both the KKT residual and
the consecutive fixed-point difference eventually decrease $R$-linearly.
The rank of $X_k$ is also identified after a finite number of iterations,
after which the iterates enter a clear local linear convergence regime.

\paragraph{Synthetic instances: SC fails and primal--dual ND holds.}
\Cref{fig:pdnd-nosc}%
\begin{figure}[tb]
\centering
\includegraphics[width=\textwidth]{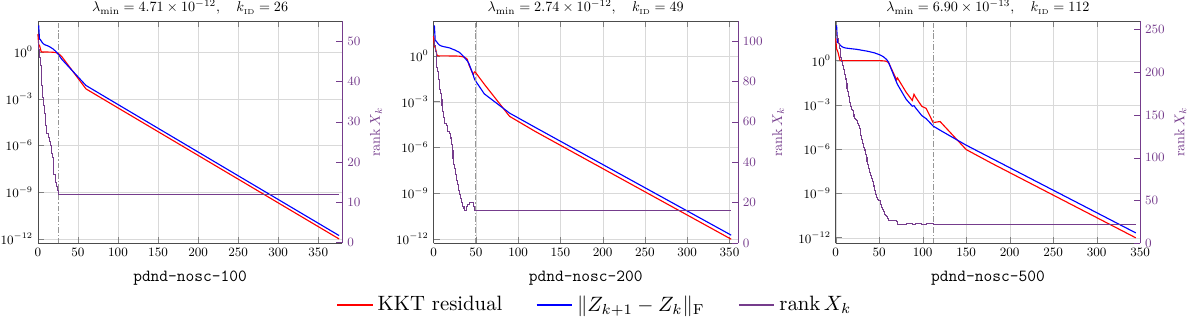}
\caption{PDHG convergence on synthetic SDP instances.
The unique KKT point satisfies primal--dual nondegeneracy and fails
strict complementarity.  The red and blue curves report the KKT residual
and the fixed-point difference, respectively, on a logarithmic scale;
the purple curve shows $\rank X_k$ on the right axis in the standard scale.
The dash-dotted vertical line marks the rank-identification iteration
$k_\ID$, and $\lambda_\mathrm{min}$ denotes the smallest absolute 
eigenvalue of the final $Z$-iterate.}
\label{fig:pdnd-nosc}
\end{figure}
reports three synthetic SDP instances constructed so that
the optimal solution is unique and satisfies primal--dual nondegeneracy,
while strict complementarity fails.
This is reflected in the values of $\lambda_\mathrm{min}$,
which are at the level of numerical roundoff.
Despite the failure of strict complementarity,
PDHG exhibits a clear local linear convergence regime in all three cases.
The rank of~$X_k$ is identified after a short transient phase,
and thereafter both the KKT residual and the consecutive fixed-point
difference decay $R$-linearly.
These examples illustrate that strict complementarity is not necessary for
the local linear behavior observed in practice,
provided that the primal--dual nondegeneracy condition holds.

\paragraph{Structure-from-motion problems: SC holds.}
\Cref{fig:xm}%
\begin{figure}[tb]
\centering
\includegraphics[width=\textwidth]{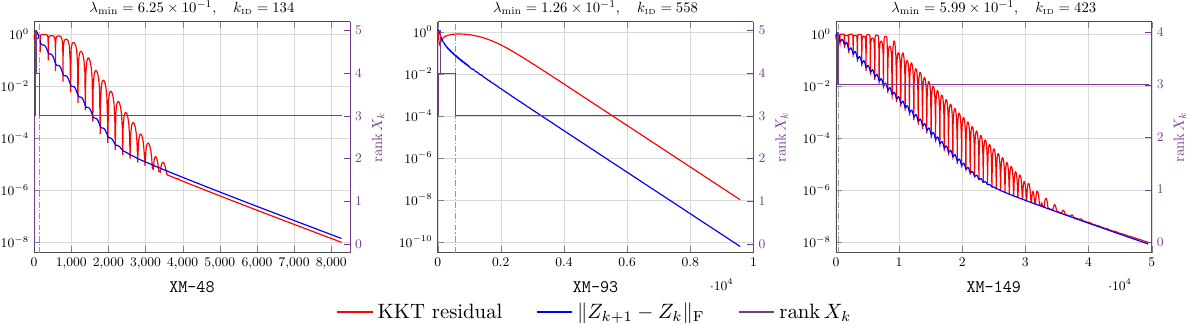}
\caption{PDHG convergence on structure-from-motion problems.
The red and blue curves report the KKT residual and the fixed-point
difference, respectively, on a logarithmic scale;
the purple curve shows $\rank X_k$ on the right axis in the standard scale.
The dash-dotted vertical line marks the rank-identification iteration
$k_\ID$, and $\lambda_\mathrm{min}$ denotes the smallest absolute
eigenvalue of the final $Z$-iterate.}
\label{fig:xm}
\end{figure}
reports three SDP instances from the structure-from-motion testbed
\cite{HY25}.
For these instances, strict complementarity appears to hold numerically.
Verifying the relevant nondegeneracy conditions is computationally
expensive for this testbed,
so we do not report a nondegeneracy certificate.
Nevertheless, the observed behavior is consistent with the preceding
experiments: after a transient phase, ($R$-)linear convergence is observed,
and the numerical rank of $X_k$ stabilizes after finitely many iterations.

\subsection{Slow convergence near failure of strict complementarity}

\Cref{fig:stall}%
\begin{figure}[t]
\centering
\includegraphics[width=\textwidth]{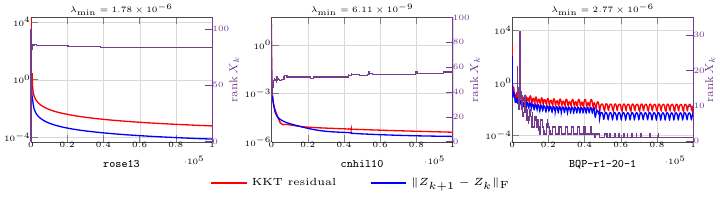}
\caption{PDHG convergence on additional SDP instances.
The red and blue curves report the KKT residual and the fixed-point
difference, respectively, on a logarithmic scale; the purple curve shows
$\rank X_k$ on the right axis in the standard scale.
Here $\lambda_{\mathrm{min}}$ denotes the smallest absolute eigenvalue of
the final $Z$-iterate.  For these SDPs, PDHG fails to achieve the accuracy
$10^{-8}$ within the stated budget, and no clear linear convergence
is observed.}
\label{fig:stall}
\end{figure}
reports several SDP instances for which PDHG does not
reach the prescribed accuracy within the computational budget,
and for which no clear linear convergence regime is observed.
A common feature of these instances is that the smallest absolute
eigenvalue of the final $Z$-iterate is small,
typically between $10^{-6}$ and $10^{-9}$, but not numerically zero.
This suggests that the corresponding instances are close to
violating strict complementarity.

There are at least two possible explanations for the observed slow
convergence.  First, PDHG may have already entered a local linear regime,
but with a contraction factor very close to one, making the progress
negligible over the observed iteration horizon.
Second, the local linear regime of PDHG, if present, may not yet have been
reached.  The latter possibility is consistent with recent observations
for PDHG applied to linear programming \cite{LY24}.
A rigorous theoretical explanation of this behavior in SDP remains open.

\section{Conclusion} \label{sec:conclusion}

We established local linear convergence of the primal--dual hybrid gradient
(PDHG) algorithm for semidefinite programming (SDP) under two regularity
assumptions: if the limit KKT point of PDHG satisfies strict
complementarity or primal--dual nondegeneracy, then the convergence is
eventually ($R$-)linear.  The proof leverages the interpretation of PDHG
as a preconditioned proximal point method.  Under strict complementarity,
the PSD-cone projection is smooth near the limiting signed matrix and
the local KKT set has a fixed face structure.  The resulting linearization
contracts the component normal to the local KKT manifold.
Under primal--dual nondegeneracy, a local error bound follows from
strong metric subregularity of the KKT mapping.
Combining these local properties with the descent property of PDHG
gives the eventual linear rate.
Moreover, we constructed a small SDP instance for which a formal local
reduction of the PDHG map predicts sublinear convergence when both
regularity conditions fail.  The numerical behavior is consistent with
this prediction, suggesting that the regularity assumptions used in the
analysis cannot, in general,
be dropped without changing the local convergence behavior.

Numerical experiments support the theory on SDP instances from different
regularity regimes.  They also reveal instances where PDHG has difficulty
reaching high accuracy, similar to observations made recently
for linear programming.

\end{document}